\newtheorem{theorem}{Theorem}
\newtheorem{proposition}{Proposition}
\newtheorem{lemma}{Lemma}
\newtheorem{corollary}{Corollary}
\theoremstyle{definition} 
\newtheorem{remark}{Remark}
\newtheorem{definition}{Definition}
\newcommand{\kma}{\mathfrak{g}}
\newcommand{\csa}{\mathfrak{h}}
\newcommand{\Oint}{\mathcal{O}^{\mathrm{int}}}
\newcommand{\cato}{\mathcal{O}}
\newcommand{\sch}{\chi}
\newcommand{\mcgo}{\mathcal{G}_e}
\newcommand{\mcge}{\mathcal{G}^\dag_e}
\newcommand{\mcg}{\mathcal{G}}
\newcommand{\mcj}{\mathcal{J}}
\newcommand{\form}[1][\cdot,\cdot]{(#1)}
\newcommand{\pair}[1][\cdot,\cdot]{\langle #1 \rangle}
\newcommand{\rop}{\Delta_+}
\newcommand{\ro}{\Delta}
\newcommand{\be}{\begin{enumerate}}
\newcommand{\ee}{\end{enumerate}}
\newcommand{\beq}{\begin{equation}}
\newcommand{\eeq}{\end{equation}}
\newcommand{\sr}{\Pi}
\DeclareMathOperator{\ch}{ch}
\DeclareMathOperator{\mult}{mult}
\newcommand{\integers}{\mathbb{Z}}
\newcommand{\complex}{\mathbb{C}}
\DeclareMathOperator{\degree}{degree}
\begin{document}
\title[]{Unique factorization of tensor products for Kac-Moody algebras}
\author{R. Venkatesh and Sankaran Viswanath}
\address{The Institute of Mathematical Sciences\\
CIT campus, Taramani\\
Chennai 600113, India}
\email{rvenkat@imsc.res.in, svis@imsc.res.in}
\subjclass[2000]{17B10,17B67}
\keywords{Unique factorization, tensor products, Kac-Moody algebras}
\begin{abstract}
We consider integrable, category $\mathcal{O}$ modules of indecomposable symmetrizable Kac-Moody algebras. We prove that unique factorization of tensor products of irreducible modules holds in this category, upto twisting by one dimensional modules. This generalizes a fundamental theorem of Rajan for finite dimensional simple Lie algebras over $\mathbb{C}$. Our proof is new even for the finite dimensional case, and uses an interplay of representation theory and combinatorics to analyze the Kac-Weyl character formula.
\end{abstract}
\maketitle

\section{Introduction}
Our base field will be the complex numbers $\complex$ throughout.
\subsection{}In \cite{rajan}, Rajan proved the following fundamental theorem on unique factorization of tensor products:
\begin{theorem}\label{csrthmorig}
Let $\kma$ be a finite dimensional simple Lie algebra, and  $\mathcal{C}$ be the category of finite dimensional $\kma$-modules. 
Let $n,m$ be positive integers and 
$V_1, V_2, \cdots, V_n$ and $W_1, W_2, \cdots, W_m$ be non-trivial irreducible $\kma$-modules in $\mathcal{C}$ such that
$$ V_1 \otimes V_2 \otimes \cdots \otimes V_n \cong  W_1 \otimes W_2 \otimes \cdots \otimes W_m.$$
Then $n=m$ and there is a permutation $\sigma$ of $\{1,2,\cdots,n\}$ such that $V_i \cong W_{\sigma(i)}$ for $1 \leq i \leq n$.
\end{theorem}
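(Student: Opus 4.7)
The plan is to translate the isomorphism into an identity of formal characters,
\beq
\prod_{i=1}^n \chi_{\lambda_i} \;=\; \prod_{j=1}^m \chi_{\mu_j},
\eeq
where $\lambda_i$ and $\mu_j$ are the highest weights of the $V_i$ and $W_j$, and to analyze this identity via the Weyl character formula. Two preliminary observations are immediate: comparing the maximal weight occurring on each side gives $\Lambda := \sum_i \lambda_i = \sum_j \mu_j$, and comparing dimensions gives $\prod_i \dim V_{\lambda_i} = \prod_j \dim V_{\mu_j}$. To deduce $n = m$ I would evaluate both sides along the one-parameter family $e^\mu \mapsto e^{t(\mu,\rho)}$ and extract leading asymptotics as $t\to\infty$: Weyl's formula shows that the leading exponential rate of $\chi_\lambda$ is controlled by $(\lambda+\rho,\rho)$, and since each $\lambda_i,\mu_j\neq 0$ (the modules are non-trivial) matching these rates forces $n=m$.

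The heart of the proof is a cancellation step. Writing the Weyl character formula as $\chi_\lambda = \mcg_\lambda/\mcg_0$ with $\mcg_\lambda := \sum_{w\in W}\epsilon(w)\,e^{w(\lambda+\rho)-\rho}$, the identity becomes
\beq
\prod_{i=1}^n \mcg_{\lambda_i} \;=\; \prod_{j=1}^n \mcg_{\mu_j}
\eeq
in the ring of $W$-antiinvariants. I would now fix a total order refining the dominance order on dominant weights, let $\lambda$ be the maximum element of the multiset $\{\lambda_1,\ldots,\lambda_n\}\cup\{\mu_1,\ldots,\mu_n\}$, and show that $\lambda$ occurs with equal multiplicity in both multisets. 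The point is to isolate this multiplicity by comparing the coefficient of a carefully chosen exponential $e^\mu$ with $\mu$ close to $\Lambda$: the dominant contribution to such a coefficient comes from the tuples in the expansion that use $\lambda$ as often as possible, and the maximality of $\lambda$ forces all other factors in such tuples to themselves contribute only from the top of the support. Once the cancellation is established, one quotients out a common $\chi_\lambda$ and finishes by strong induction on $n$.

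The main obstacle will be proving this cancellation lemma in a form that survives the Kac-Moody setting. Rajan's original argument leans on tools specific to finite dimensions --- the Casimir eigenvalue, Weyl's dimension formula, and the PRV component --- none of which transport straightforwardly to symmetrizable Kac-Moody algebras, where integrable highest-weight modules can be infinite-dimensional and the Weyl denominator is an infinite product. The abstract signals that the authors replace these by a purely combinatorial analysis of the Kac-Weyl numerators $\mcg_\lambda$, and I would expect the cleverness of the proof to reside in precisely which coefficients of the antiinvariant product one chooses to compare in order to pin down a single factor.
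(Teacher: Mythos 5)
Your reduction of the isomorphism to an identity of Weyl numerators $\prod_i U_{\lambda_i} = \prod_j U_{\mu_j}$ is exactly the paper's first step, but the two places where the real work happens are both missing or broken. First, the asymptotic argument for $n=m$ does not work as stated: specializing $e^\mu \mapsto e^{t(\mu,\rho)}$, each $\sch_\lambda$ grows like $e^{t(\lambda,\rho)}$, so the leading exponential rate of either product is $e^{t(\Lambda,\rho)}$ with $\Lambda = \sum_i\lambda_i = \sum_j\mu_j$ — a quantity you have already shown is the same on both sides. The rate carries no information about the number of factors, so it cannot force $n=m$. (In the paper $n=m$ is not proved up front at all: one passes to the equivalent formulation with equal numbers of factors by padding with trivial modules, proves matching of all factors, and observes that a non-trivial $V_i$ cannot be matched to a trivial pad.)

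Second, and more seriously, the cancellation lemma — the entire content of the theorem — is left as an acknowledged gap, and the strategy you sketch for it runs into the cross-term problem that the paper's construction is specifically designed to avoid. The coefficient of $e^{\Lambda + n\rho - \beta}$ in $\prod_i \sum_{w}\varepsilon(w)e^{w(\lambda_i+\rho)}$ receives contributions from \emph{every} tuple $(w_1,\dots,w_n)$ with $\sum_i\bigl(\lambda_i+\rho - w_i(\lambda_i+\rho)\bigr) = \beta$, and there is no evident choice of $\beta$ "close to $\Lambda$" that isolates the factor with maximal highest weight: low-order contributions ($w_i = s_\alpha$) only see the single coordinate $\pair[\lambda_i+\rho,\alpha^\vee]$, and they detect the \emph{minimum} of that coordinate over $i$, not a maximal $\lambda$. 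The paper's resolution has two ingredients you do not have: (i) apply $-\log$, which converts the product into the sum $\sum_i(-\log U_{\lambda_i})$ so that each factor contributes independently and cross terms disappear; and (ii) restrict attention to \emph{regular} monomials (those divisible by every $X_\alpha = e^{-\alpha}$) of \emph{minimal} degree. Propositions \ref{1} and \ref{keyfact} show that the unique minimal-degree regular monomial in $-\log U_\lambda$ is $M^\lambda = \prod_\alpha X_\alpha^{\pair[\lambda+\rho,\alpha^\vee]}$, with coefficient equal to $\mult\bigl(\sum_{\alpha\in\sr}\alpha\bigr) \geq 1$ for indecomposable $\kma$; this monomial packages all coordinates of a single $\lambda_i$ at once and hence pins down one factor, after which one cancels and inducts. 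Without the logarithm and the regular/minimal-degree device (or some substitute for them), the coefficient comparison you propose does not close.
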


The following is an equivalent formulation of theorem \ref{csrthmorig} in which $n=m$, but with trivial modules allowed.
\begin{theorem} \label{csrthm}
Let $\kma$ be a finite dimensional simple Lie algebra, and $\mathcal{C}$ be the category of finite dimensional $\kma$-modules. Let $n$ be a positive integer, and  
suppose $V_1, V_2, \cdots, V_n$ and $W_1, W_2, \cdots, W_n$ are  irreducible $\kma$-modules in $\mathcal{C}$ such that
$$ V_1 \otimes V_2 \otimes \cdots \otimes V_n \cong  W_1 \otimes W_2 \otimes \cdots \otimes W_n.$$
Then there is a permutation $\sigma$ of $\{1,2,\cdots,n\}$ such that $V_i \cong W_{\sigma(i)}$ for $1 \leq i \leq n$.
\end{theorem}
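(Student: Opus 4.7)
The natural starting point is character theory. Writing $V_i = V(\lambda_i)$ and $W_i = V(\mu_i)$ with $\lambda_i, \mu_i$ dominant integral, the tensor-product isomorphism is equivalent to the character identity $\prod_i \ch V(\lambda_i) = \prod_i \ch V(\mu_i)$. Applying the Weyl character formula $\ch V(\lambda) = N_\lambda/D$ and cancelling the common $n$-fold Weyl denominator, this becomes the algebraic identity
\[
\prod_{i=1}^{n} N_{\lambda_i} \;=\; \prod_{i=1}^{n} N_{\mu_i}
\]
in the integral group ring $\integers[P]$ of the weight lattice, where $N_\lambda = \sum_{w\in W}(-1)^{\ell(w)} e^{w(\lambda+\rho)-\rho}$ is the Weyl numerator. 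The whole argument is to consist of exploiting this single identity.

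The first and easiest extraction of information is via the dominance order: the unique dominance-maximal exponent in $N_\lambda$ is $\lambda$, with coefficient $+1$, so the leading monomial of $\prod N_{\lambda_i}$ is $e^{\sum \lambda_i}$, which already yields $\sum \lambda_i = \sum \mu_i =: \Lambda$. I plan to proceed by induction on $\Lambda$ (ordered, say, by $\langle \Lambda, \rho^\vee\rangle$): the base case $\Lambda = 0$ makes every factor trivial and is vacuous. For the inductive step it suffices to produce a single pair of indices $(i,j)$ with $V_i \cong W_j$; since the representation ring of $\kma$ is an integral domain (in fact a polynomial ring in the fundamental characters), one can then cancel and apply the inductive hypothesis to the remaining $(n-1)$-fold products on each side.

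The main difficulty is to locate such a common factor from the numerator identity. A combinatorial first handle comes from weight multiplicities just below $\Lambda$: for each simple root $\alpha$, the weight $\Lambda - \alpha$ has multiplicity $\#\{i : \langle \lambda_i, \alpha^\vee\rangle > 0\}$ in $\bigotimes V(\lambda_i)$, and equating with the $\mu$-side gives constraints on the two multisets of highest weights. One would then iterate by analysing multiplicities at $\Lambda - \beta$ for positive roots $\beta$ of increasing height, equivalently expanding the numerator product layer by layer in the dominance order, with the goal of isolating enough structure to match some $\lambda_i$ with some $\mu_j$. The hardest step will be to turn these piecewise multiplicity comparisons into the clean assertion that some $\lambda_i$ equals some $\mu_j$; the right mechanism must blend this combinatorics with a finer analysis of the Kac--Weyl numerator structure, and the framework must be set up so that the reasoning carries over to the Kac--Moody category $\cato$ where $W$ is infinite and the numerators become infinite formal sums.
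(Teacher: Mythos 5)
Your setup coincides exactly with the paper's: reduce to the numerator identity $\prod_i N_{\lambda_i}=\prod_j N_{\mu_j}$, extract $\sum\lambda_i=\sum\mu_j$ from the dominance-maximal term, and observe that it suffices to exhibit one pair $(i,j)$ with $\lambda_i=\mu_j$, after which one cancels (the group ring $\integers[P]$ is an integral domain) and inducts. But the entire content of the theorem lies in the step you defer --- producing that one matching pair --- and what you propose in its place is not a proof. Comparing weight multiplicities at $\Lambda-\beta$ for positive roots $\beta$ of increasing height only yields, at each layer, an equality of \emph{symmetric} functions of the multisets $\{\lambda_i\}$ and $\{\mu_j\}$ (e.g.\ at $\Lambda-\alpha$ you get $\#\{i:\pair[\lambda_i,\alpha^\vee]>0\}=\#\{j:\pair[\mu_j,\alpha^\vee]>0\}$, which is very weak). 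Disentangling an individual equality $\lambda_i=\mu_j$ from such symmetric data is precisely the hard combinatorial problem, and already for $\mathfrak{sl}_2$ the layer-by-layer comparison does not terminate in any obvious way: there the identity reduces to $\prod_i(1-x^{a_i+1})=\prod_j(1-x^{b_j+1})$, and one needs a genuinely new idea (roots of unity, or a logarithm) to conclude that the multisets agree.

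The paper's missing mechanism is a linearization: apply $-\log$ to the normalized numerators $U_\lambda=e^{-(\lambda+\rho)}N_\lambda$, turning the product identity into the \emph{sum} $\sum_i(-\log U_{\lambda_i})=\sum_j(-\log U_{\mu_j})$ of power series in the variables $X_\alpha=e^{-\alpha}$. The key propositions (propositions \ref{1} and \ref{keyfact}) then show that among the \emph{regular} monomials of $-\log U_\lambda$ (those divisible by every $X_\alpha$), there is a unique one of minimal degree, namely $M^\lambda=\prod_\alpha X_\alpha^{\pair[\lambda+\rho,\alpha^\vee]}$, occurring with coefficient $c(\kma)=\mult\bigl(\sum_{\alpha\in\sr}\alpha\bigr)\geq 1$ when $\kma$ is indecomposable; this requires a careful analysis of which Weyl group elements $w$ can contribute to low-degree regular terms (lemma \ref{loglem}), and is where representation theory and the combinatorics of the graph of $\kma$ actually enter. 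Picking $\lambda_i$ of minimal $\deg$, the monomial $M^{\lambda_i}$ must then occur on the other side of the summed identity, forcing $M^{\lambda_i}=M^{\mu_j}$ and hence $\lambda_i=\mu_j$. Without this (or some equally concrete separating device), your outline stalls exactly at the point you yourself flag as ``the hardest step,'' so as it stands the proposal has a genuine gap rather than an alternative proof.
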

The characters of $\kma$-modules are Weyl group invariant polynomial functions on the maximal torus. Rajan's proof of theorem \ref{csrthmorig} proceeds via an intricate inductive analysis of the characters of tensor products, by fixing one of the variables, and passing to a suitable lower rank Lie algebra.

 Special cases of this theorem for $\kma = \mathfrak{sl}_r$ appear in later works of Purbhoo-Willigenburg (for $n=2$) \cite{pw}, and Bandlow-Schilling-Thi\'ery (for highest weights corresponding to rectangular Young diagrams) \cite{bst}. Their proofs employ the combinatorics of Young tableaux. 

The primary goals of this paper are (i) to give an alternate, elementary proof of theorem \ref{csrthm}, and (ii) to 
obtain a generalization to a natural category of representations of symmetrizable Kac-Moody algebras. 

First, suppose $\kma$ is a Lie algebra (possibly infinite dimensional)  and $\mathcal{C}$ is a category of $\kma$-modules closed under $\otimes$ and containing the trivial module. If the assertion of theorem \ref{csrthm} holds for $(\kma, \mathcal{C})$, then $\mathcal{C}$ cannot contain non-trivial one-dimensional $\kma$-modules. For, if $V$ is a non-trivial one dimensional $\kma$-module in $\mathcal{C}$, then so is $W:=V \otimes V$. But both $V$ and $W$ are irreducible, violating uniqueness (with $V_1=V_2=V$ and $W_1=W, \,W_2=$trivial). 

When $\kma$ is a symmetrizable Kac-Moody algebra, a natural category of representations is $\Oint$, whose objects are integrable $\kma$-modules in category $\cato$. When the 
generalized Cartan matrix of $\kma$ is singular (for example, when $\kma$ is affine), 
we have $\kma/[\kma,\kma] \neq 0$; in other words, there are non-trivial one dimensional $\kma$-modules in $\Oint$. 
Thus, unique factorization of tensor products fails in general for $(\kma, \Oint)$. 
We show that this is essentially the only obstruction, i.e., uniqueness still holds
 upto twisting by one-dimensional representations. The following is the statement of our main theorem:
\begin{theorem}\label{mainthm}
 Let $\kma$ be an indecomposable symmetrizable Kac-Moody algebra.
 Let $n$ be a positive integer and suppose $V_1, V_2, \cdots, V_n$ and $W_1, W_2, \cdots, W_n$ are 
irreducible $\kma$-modules in category $\Oint$ such that 
\begin{equation}\label{maineq}
 V_1\otimes \cdots \otimes V_n\cong W_1\otimes \cdots \otimes W_n.
\end{equation}
Then there is a permutation $\sigma$ of the set $\{1,...,n\}$, and one-dimensional $\kma$-modules $Z_i$ such
 that $V_i \otimes Z_i \cong W_{\sigma(i)}, 1\leq i\leq n$.
\end{theorem}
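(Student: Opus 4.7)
My plan is to work directly from the Weyl--Kac character formula, bypassing Rajan's rank induction (which does not transfer cleanly to the Kac-Moody setting). Let $\lambda_i,\mu_i\in P^+$ denote the highest weights of $V_i,W_i$. One-dimensional $\kma$-modules in $\Oint$ correspond to weights $\psi\in(\csa^*)^W$, and for such $\psi$ the Weyl--Kac numerator $A(\lambda):=\sum_{w\in W}\epsilon(w)e^{w(\lambda+\rho)}$ satisfies $A(\lambda+\psi)=e^\psi A(\lambda)$ (since $W$ fixes $\psi$). So the task reduces to showing that $\{[\lambda_i]\}$ and $\{[\mu_i]\}$ agree as multisets in $P^+/(\csa^*)^W$.

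The opening step is to multiply the character identity $\prod\ch V_{\lambda_i}=\prod\ch V_{\mu_i}$ by the $n$-th power of the Weyl denominator $A(0)$, reducing the hypothesis to
\[
\prod_{i=1}^n A(\lambda_i)=\prod_{i=1}^n A(\mu_i)
\]
in a suitable completion of $\complex[\csa^*]$. The unique largest weight on either side is $\sum_i\lambda_i+n\rho$, coming from the all-identity tuple, giving $\sum_i\lambda_i=\sum_i\mu_i$. More data is harvested by looking at the coefficient at $\sum_i\lambda_i+n\rho-\beta$ as $\beta$ ranges over low-height elements of $Q^+$. For $\beta=\alpha_j$ a simple root, only tuples $(w_1,\ldots,w_n)$ with a single $w_k=s_j$ can contribute, and such a contribution occurs exactly when $\pair[\lambda_k,\alpha_j^\vee]=0$, so
\[
\#\{k:\pair[\lambda_k,\alpha_j^\vee]=0\}=\#\{k:\pair[\mu_k,\alpha_j^\vee]=0\}
\]
for every simple root $\alpha_j$. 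Iterating at larger heights of $\beta$ should produce enough combinatorial data to isolate a specific pair $(\lambda_i,\mu_j)$ with $\mu_j-\lambda_i\in(\csa^*)^W$, after which the plan is to induct on $n$.

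The main obstacle is the induction step. Having matched one pair, one must ``cancel'' $A(\lambda_i)$ (up to the factor $e^{\mu_j-\lambda_i}$) from both sides of the product identity. The $A(\lambda)$'s are not literally prime in the character ring, so this cancellation cannot be argued by formal algebra; it must instead be extracted from another round of extremal coefficient analysis --- effectively a cancellation law for products of alternating Weyl numerators. This is the real heart of the argument and will demand precise control over how the $W$-orbits of the $\lambda_i+\rho$ interact inside the cone $\sum_i\lambda_i+n\rho-Q^+$. Indecomposability of $\kma$ enters by keeping $(\csa^*)^W$ small (it sits inside the radical of the generalized Cartan matrix), so the enumerative constraints above genuinely separate distinct classes $[\lambda_i]$; the degenerate cases where several $V_i$ are trivial or several classes $[\lambda_i]$ coincide will require additional bookkeeping.
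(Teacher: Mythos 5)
Your reduction to the numerator identity $\prod_i A(\lambda_i)=\prod_i A(\mu_i)$, the observation that one-dimensional modules correspond to $W$-fixed weights with $A(\lambda+\psi)=e^{\psi}A(\lambda)$, and the highest-weight comparison giving $\sum_i\lambda_i=\sum_i\mu_i$ all match the paper's opening moves (and its Lemma~\ref{easy}). But the core of the argument --- actually producing a pair $(\lambda_i,\mu_j)$ with $\overline{\lambda_i}=\overline{\mu_j}$ --- is missing. Your plan to compare coefficients of $e^{\sum_i\lambda_i+n\rho-\beta}$ as $\beta$ ranges over $Q^+$ of increasing height gives correct but weak data at height one (the count of $k$ with $\pair[\lambda_k,\alpha_j^\vee]=0$), and the phrase ``iterating at larger heights should produce enough combinatorial data'' is where the proof stops being a proof: at higher heights many tuples $(w_1,\dots,w_n)$ contribute to each coefficient, the contributions from different factors interleave, and there is no evident way to disentangle them. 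This is precisely the difficulty that forced Rajan into his delicate rank-reduction induction, and you have not supplied a substitute.

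Two further points. First, your stated ``main obstacle'' --- that cancellation of a matched factor cannot be done by formal algebra because the $A(\lambda)$ are not prime --- is a non-issue: after normalizing, $U_\lambda=e^{-(\lambda+\rho)}A(\lambda)$ lies in $\complex[[X_\alpha:\alpha\in\sr]]$ with constant term $1$, hence is a unit, and one simply divides both sides by it. The genuine obstacle is finding the matched pair, not cancelling it. Second, the paper's device for finding that pair is to pass to $-\log U_\lambda$ (turning the product into a sum, so each factor contributes independently) and to restrict to \emph{regular} monomials, i.e.\ those involving every variable $X_\alpha$. Propositions~\ref{1} and~\ref{keyfact} show that the unique regular monomial of minimal degree in $-\log U_\lambda$ is $M^\lambda=\prod_\alpha X_\alpha^{\pair[\lambda+\rho,\alpha^\vee]}$, with coefficient $c(\mcg)$ equal to the multiplicity of the root $\sum_{\alpha\in\sr}\alpha$, which is positive exactly when $\kma$ is indecomposable; taking $\lambda_1$ of minimal degree then forces $M^{\mu_j}=M^{\lambda_1}$ for some $j$, whence $U_{\mu_j}=U_{\lambda_1}$ and the induction closes. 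Without an ingredient of this kind (or a completed version of your height-by-height analysis, which would amount to redoing Rajan's argument in the Kac--Moody setting), your proposal does not yet prove the theorem.
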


\begin{remark}
If $\kma$ is finite dimensional, then 
(a) $\Oint = \mathcal{C}$, 
(b) {\em indecomposable} is the same as {\em simple} and
(c) the only one dimensional $\kma$-module is the trivial one.
Thus, theorem \ref{mainthm} is a generalization of theorem \ref{csrthm}.
\end{remark}

\begin{remark}
Unique factorization of tensor products upto twisting by one dimensional modules also appears naturally in the finite dimensional context when $\kma \neq [\kma,\kma]$, for instance, when considering the Lie algebra $\mathfrak{gl}_n$ instead of $\mathfrak{sl}_n$ \cite[theorem 3]{rajan}.
\end{remark}
\begin{remark}
Theorem \ref{mainthm} can be interpreted at the level of characters. For example, 
the characters of finite dimensional irreducible $\mathfrak{sl}_n$-modules are the Schur functions; so if a symmetric polynomial can be factored as a product of Schur functions, then this factorization is unique (cf. \cite[proposition 5.1]{bst} and \cite[theorem 2.6]{pw}).  
Analogously, when $\kma$ is an affine Kac-Moody algebra, the formal characters of irreducible modules in category $\Oint$ form a distinguished basis for the space of theta functions considered as a module over the algebra of holomorphic functions on the upper half plane \cite[chapter 13]{kac}. In this setting, our main theorem implies that if a theta function has a factorization as a product of irreducible characters, then such a factorization is unique.
\end{remark}

\subsection{}
The main idea of our proof of theorem \ref{mainthm} is easily illustrated in the simplest case of $\kma = \mathfrak{sl}_2$.  Here, the highest weights of $V_i$ and $W_j$ are indexed by positive integers $a_i$ and $b_j$. Comparing highest weights of the modules in equation \eqref{maineq}, one obtains $\sum_{i=1}^n a_i = \sum_{j=1}^n b_j$. Taking formal characters on both 
sides and simplifying, one gets:
\beq\label{chareqn}
\prod_{i=1}^n (1-x^{a_i+1}) = \prod_{j=1}^n (1-x^{b_j+1})
\eeq
where $x :=e^{-\alpha}$ and $\alpha$ is the positive root of $\mathfrak{sl}_2$. Note that equation \eqref{chareqn} is essentially just the equality of the product of numerators that appear in the Weyl character formula. It is a classical fact\footnote{see \cite[cor. 2.7]{macd} for an application of this to Poincar\'{e} series of finite Weyl groups.} that equation \eqref{chareqn} implies the equality of the multisets $\{a_i +1: 1 \leq i \leq n\}$ and $\{b_j +1: 1 \leq j \leq n\}$. We recall \cite[proposition 4]{rajan} that one way to prove this is by observing that if these multisets are disjoint (which can be ensured by cancelling common terms in equation \eqref{chareqn}), then $x:=\exp(2\pi i/K)$, where $K$ is the largest element in the union of these multisets, is a zero of exactly one side of equation \eqref{chareqn}. 

Alternatively, we can apply the logarithm to equation \eqref{chareqn} to obtain an equality of formal power series:
\beq\label{logeqn}
\sum_{i=1}^n \sum_{p  >0} \frac{x^{p(a_i+1)}}{p} = \sum_{j=1}^n \sum_{p  >0} \frac{x^{p(b_j+1)}}{p}
\eeq
Now, letting $k$ denote the minimal element in the union of the (disjoint) multisets as above, we observe that (a) all 
terms appearing on both sides of equation \eqref{logeqn} involve only $x^r$ for $r \geq k$ and (b) the term $x^k$ appears on exactly one side of the equation (and with coefficient 1). This is
 the required contradiction.

Our proof of theorem \ref{mainthm} is based on this latter approach. We reinterpret the given isomorphism of tensor products as an equality of products of (normalized) Weyl numerators. These are now power series in $l$-variables, where $l$ is the the rank of $\kma$. We then show that the logarithm of a Weyl numerator has a unique monomial of smallest degree containing all variables (propositions \ref{1}, \ref{keyfact}). This is sufficient to establish uniqueness of the irreducible factors in the tensor product, along the same lines as 
for $\mathfrak{sl}_2$.

We remark that if $\kma$ is a finite dimensional simple Lie algebra, then 
the Weyl numerator is a priori a polynomial, but its logarithm is in 
the larger ring of formal power series. When $\kma$ is infinite dimensional, the Weyl numerator is a formal power series to begin with. 

The paper is organized as follows: \S \ref{prelims} sets up the notational preliminaries, \S \ref{three} contains the key statements concerning the logarithm of normalized Weyl numerators, and the proof of our main theorem, while \S \ref{four} uses an interplay of combinatorics and representation theory to prove the key propositions of \S \ref{three}. The reader who is only interested in the finite dimensional case can skip the discussion concerning non-trivial one dimensional representations of $\kma$, and follow the rest of the paper taking $\kma = [\kma,\kma]=$a finite dimensional simple Lie algebra and $\Oint=$the category of finite dimensional $\kma$-modules.


\section{Preliminaries} \label{prelims}

\subsection{}
We will broadly follow the notations of Kac \cite{kac}.
Let $\kma$ be a symmetrizable Kac-Moody algebra with Cartan subalgebra
$\csa$. Let $\sr$ be the set of simple roots, $\ro$ the set of roots, and $\rop$ the set of positive roots of $\kma$. For $\alpha \in \sr$, let $\alpha^\vee$ denote the corresponding 
simple coroot. Let $P$, $Q$, $P^+$, $Q^+$ be the weight lattice, the
root lattice and the sets of dominant weights and non-negative integer
linear combinations of simple roots respectively. Let $W$ be the Weyl group of $\kma$, generated by the simple reflections $\{s_\alpha: \alpha \in \sr\}$, and let $\varepsilon$ be its sign character.
 Let $\form$ be a nondegenerate, $W$-invariant symmetric bilinear form on $\csa^*$.
Let $\csa'$ be the span of the simple coroots of $\kma$ and 
$\kma' = [\kma,\kma]$ be the derived subalgebra of $\kma$. 
We then have $\kma = \kma' + \csa$ and $\kma' \cap \csa = \csa'$.
The category $\Oint$ consists of integrable $\kma$-modules in category $\cato$ \cite{kac}. The simple objects of $\Oint$ are the highest weight modules $L(\lambda)$ indexed by $\lambda \in P^+$. Given $\lambda \in \csa^*$, define $\overline{\lambda}:=\lambda\mid_{\csa'}$.

Let $\mcg$ be the graph underlying the Dynkin diagram of $\kma$, i.e., $\mcg$ has vertex set $\sr$, with an edge between two vertices $\alpha$ and $\beta$ iff 
$\form[\alpha,\beta] <0$. We will refer to $\mcg$ as the {\em graph of} $\kma$. 
Observe that $\mcg$ does not keep track of the Cartan integers $\frac{2\form[\alpha,\beta]}{\form[\beta,\beta]}$; thus for instance the classical series $A_n, B_n$ and $C_n$ all have the same graph. 

Given $\lambda \in P^+$, the module $L(\lambda)$ has a weight space decomposition $L(\lambda) = \oplus_{\mu \in \csa^*} L(\lambda)_\mu$. The formal character of $L(\lambda)$ is  $\ch L(\lambda)  := \sum_{\mu} \dim(L(\lambda)_\mu)\, e^\mu$.  We define the normalized character by  $\sch_\lambda:=e^{-\lambda}\ch(L(\lambda))$, and the normalized Weyl numerator by:
\beq\label{wnum}
U_\lambda:= e^{-(\lambda + \rho)} \sum\limits_{w\in W}\varepsilon(w)e^{w(\lambda+\rho)}.
\eeq
where $\rho$ is the Weyl vector, satisfying $\pair[\rho,\alpha^\vee]=1$ for all $\alpha \in \sr$.
The Weyl-Kac character formula gives:
\beq\label{wk}
\sch_\lambda= U_\lambda/U_0.
\eeq

\subsection{}
Now, let $X_\alpha:=e^{-\alpha}$, $\alpha \in \sr$ and consider the algebra of formal power series $\mathcal{A}:=\complex[[X_\alpha:\alpha \in \sr\,]]$. Since $L(\lambda)$ has highest weight $\lambda$, it is clear that $\sch_\lambda \in \mathcal{A}$. We also have that $U_\lambda \in \mathcal{A}$, since $(\lambda+ \rho) - w(\lambda + \rho) \in Q^+$ for all $w \in W$. Both $\sch_\lambda$ and $U_\lambda$ have constant term 1.

We call a monomial $\kappa= \prod\limits_{\alpha \in \sr}X^{p_\alpha}_\alpha\in \mathcal{A}$ {\em regular} if $p_\alpha\geq 1$ for all $\alpha\in \sr$.  Given $f \in \mathcal{A}$, say $f = \sum_{\kappa} b_\kappa \, \kappa$ (the sum running over monomials $\kappa$), the regular part of $f$, denoted $f^\#$, is defined to be the sum of only the regular terms in $f$, i.e., $f^\# := \displaystyle\sum_{\kappa \text{ regular}} b_\kappa \, \kappa$. It is easy to see that $f^\#$ is given by the formula
$f^\# = \sum\limits_{J\subset \sr}(-1)^{|J|} \, f|_{{}_{X_\alpha=0,\alpha\in J}}$ but we will not need this.

Also given $\gamma \in P^+$, define the associated regular monomial 
$M^{\gamma}:=\prod\limits_{\alpha \in \sr}X^{\pair[\gamma+\rho,\alpha^{\vee}]}_\alpha$, and let $\deg(\gamma):=\degree (M^\gamma) = \sum\limits_{\alpha \in \sr}  \pair[\gamma+\rho, \alpha^{\vee}]$.

\section{Proof of the main theorem} \label{three}

The following lemma collects together some well-known properties :
\begin{lemma} \label{easy} Let $\kma$ be a symmetrizable Kac-Moody algebra and $\lambda, \mu \in P^+$. 
The following statements are equivalent:
(a) $\sch_\lambda = \sch_\mu$, 
(b) $U_\lambda = U_\mu$,
(c) $M^\lambda = M^\mu$,
(d) $\overline{\lambda} = \overline{\mu}$,
(e) $L(\lambda) \cong L(\mu)$ as $\kma'$-modules,
(f) $L(\lambda) \otimes Z \cong L(\mu)$ as $\kma$-modules, for some one dimensional $\kma$-module $Z$.
\end{lemma}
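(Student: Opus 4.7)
The plan is to show all six statements are equivalent through a short cycle of implications: $(a)\Leftrightarrow(b)$ comes directly from the Weyl--Kac formula, $(c)\Leftrightarrow(d)$ is essentially a definition, $(d)\Leftrightarrow(e)\Leftrightarrow(f)$ uses restriction to $\kma'$, and the loop closes with $(d)\Rightarrow(b)$ and $(b)\Rightarrow(c)$.

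For $(a)\Leftrightarrow(b)$, the identity $\sch_\lambda=U_\lambda/U_0$ gives both implications, since $U_0\in \mathcal{A}$ has constant term $1$ and is therefore invertible. For $(c)\Leftrightarrow(d)$, the definition $M^\gamma=\prod_\alpha X_\alpha^{\pair[\gamma+\rho,\alpha^\vee]}$ turns $M^\lambda=M^\mu$ into $\pair[\lambda,\alpha^\vee]=\pair[\mu,\alpha^\vee]$ for every simple coroot, and since the simple coroots span $\csa'$, this is exactly $\overline{\lambda}=\overline{\mu}$.

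For $(d)\Leftrightarrow(e)\Leftrightarrow(f)$, I would restrict to $\kma'$. Using $\kma'=\kma^-\oplus\csa'\oplus\kma^+$, the highest weight vector $v_\lambda$ still generates $L(\lambda)$ over $\kma'$ and has $\csa'$-weight $\overline{\lambda}$; any $\kma'$-submodule is automatically $\kma$-stable, so $L(\lambda)|_{\kma'}$ is irreducible of $\csa'$-highest weight $\overline{\lambda}$, giving $(d)\Leftrightarrow(e)$. A one-dimensional $\kma$-module is annihilated by $\kma'=[\kma,\kma]$, so $(f)\Rightarrow(e)$ by restriction. Conversely, if $\overline{\lambda}=\overline{\mu}$ then $\mu-\lambda$ vanishes on $\csa'$ and defines a character of $\csa/\csa'\cong\kma/\kma'$; the associated one-dimensional module $Z$ makes $L(\lambda)\otimes Z$ irreducible of highest weight $\mu$, hence isomorphic to $L(\mu)$, yielding $(e)\Rightarrow(f)$.

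To close the loop: for $(d)\Rightarrow(b)$, $\overline{\lambda}=\overline{\mu}$ means $\lambda-\mu$ is annihilated by every simple coroot, so it is fixed by every simple reflection, hence $W$-invariant; a direct expansion then yields $(\lambda+\rho)-w(\lambda+\rho)=(\mu+\rho)-w(\mu+\rho)$ for every $w\in W$, so $U_\lambda=U_\mu$. For $(b)\Rightarrow(c)$, I would specialize $U_\lambda$ by setting $X_\beta=0$ for every simple root $\beta\neq\alpha$; the surviving terms come from $w\in W$ with $(\lambda+\rho)-w(\lambda+\rho)\in\integers_{\ge 0}\alpha$. The $W$-invariance of $\form$ gives $|\lambda+\rho-k\alpha|^2=|\lambda+\rho|^2$, forcing $k=0$ or $k=\pair[\lambda+\rho,\alpha^\vee]$; in the latter case $w(\lambda+\rho)=s_\alpha(\lambda+\rho)$, and the trivial stabilizer of the regular weight $\lambda+\rho$ (which satisfies $\pair[\lambda+\rho,\alpha^\vee]\ge 1$ for every simple $\alpha$) forces $w=s_\alpha$. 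Hence the specialization equals $1-X_\alpha^{\pair[\lambda+\rho,\alpha^\vee]}$, and $U_\lambda=U_\mu$ yields $\pair[\lambda+\rho,\alpha^\vee]=\pair[\mu+\rho,\alpha^\vee]$ for every $\alpha\in\sr$, i.e., $M^\lambda=M^\mu$.

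The only step with real content is $(b)\Rightarrow(c)$, where isolating the single-variable part of $U_\lambda$ requires both the $W$-invariance of $\form$ and the regularity of $\lambda+\rho$ to rule out contributions from longer Weyl elements; everything else is bookkeeping.
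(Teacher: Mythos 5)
Your proof is correct and follows essentially the same route as the paper: the same cycle of implications, with (a)$\Leftrightarrow$(b) from the Weyl--Kac formula, (c)$\Leftrightarrow$(d) by definition, (d)$\Leftrightarrow$(e)$\Leftrightarrow$(f) via restriction to $\kma'$ (which the paper simply cites from Kac, \S 9.10), and the loop closed by isolating the pure $X_\alpha$-power terms of $U_\lambda$. The only differences are cosmetic: you justify the claim that $-X_\alpha^{\pair[\lambda+\rho,\alpha^\vee]}$ is the unique monomial of the form $X_\alpha^n$ in $U_\lambda$ using the $W$-invariant form and the regularity of $\lambda+\rho$ (the paper asserts this, pointing to $w=s_\alpha$), and you prove (d)$\Rightarrow$(b) by observing that $\lambda-\mu$ is $W$-fixed rather than by the paper's induction on $l(w)$ via its equation \eqref{gwg}.
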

\begin{proof}
The Weyl character formula (equation \eqref{wk}) shows that (a) and (b) are equivalent, while (c) $\Leftrightarrow$ (d) follows from definitions. The equivalence of (d), (e) and (f) can be found in \cite[\S 9.10]{kac}. The implication (b) $\Rightarrow$ (d) follows from the observation that the only monomial in $U_\lambda$ of the form $X_\alpha^n$ is $-X_\alpha^{\pair[\lambda+\rho, \alpha^\vee]}$ (corresponding to $w = s_\alpha$ in equation \eqref{wnum}). Finally, (d) $\Rightarrow$ (b) because the expression $w(\lambda+\rho) - (\lambda+\rho)$ only depends on the values $\pair[\lambda+\rho, \alpha^\vee]$ for $\alpha \in \sr$ 
(for instance, this follows from equation \eqref{gwg} below and induction).
\end{proof}


Next, recall that if  $\eta \in \mathcal{A}$ is a formal power series with constant term 1, its logarithm is a well defined formal power series: $\log \eta = - \sum_{p \geq 1} (1 - \eta )^p/p$. The next two propositions are the key ingredients in the proof of our main theorem.

\begin{proposition} \label{1}
 Let $\kma$ be a symmetrizable Kac-Moody algebra. Given $\lambda \in P^+$, we have:  
$$(-\log U_\lambda)^\# = c(\kma, \lambda) \,M^\lambda + \text{\rm regular monomials of degree} >\deg(\lambda)$$ 
for some $c(\kma, \lambda) \in \integers_{\geq 0}$. Further, $c(\kma, \lambda)$ is independent of $\lambda$, and only 
depends on the graph of $\kma$.
\end{proposition}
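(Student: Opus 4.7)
The plan is to analyze $-\log U_\lambda = \sum_{k\ge 1}(1-U_\lambda)^k/k$ by expanding $1-U_\lambda = -\sum_{w\ne e}\varepsilon(w) X^{(\lambda+\rho)-w(\lambda+\rho)}$ and determining, monomial by monomial, which ordered tuples $(w_1,\dots,w_k) \in (W\setminus\{e\})^k$ can contribute to the regular part of $-\log U_\lambda$ in low degree. Each such tuple contributes the scalar $\frac{(-1)^k}{k}\prod_i\varepsilon(w_i)$ times $X^\mu$, where $\mu = \sum_i ((\lambda+\rho)-w_i(\lambda+\rho))$.

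The technical core is the following lemma, which I would prove by induction on $\ell(w)$: for every $w\in W$ and $\alpha\in\sr$, the $\alpha$-coefficient $c_\alpha(w)$ of $(\lambda+\rho)-w(\lambda+\rho)$ is either zero or at least $\pair[\lambda+\rho,\alpha^\vee]$. Writing $w = s_\gamma w'$ with $\ell(w')=\ell(w)-1$, the coefficient $c_\alpha$ is unchanged when $\gamma\ne\alpha$ and jumps by $\pair[w'(\lambda+\rho),\alpha^\vee]$ when $\gamma=\alpha$. In the critical subcase $\gamma=\alpha$, $c_\alpha(w')=0$: expanding $w'(\lambda+\rho) = (\lambda+\rho) - \sum_{\beta\ne\alpha}c_\beta(w')\beta$ and using $\pair[\beta,\alpha^\vee]\le 0$ for distinct simple roots gives $\pair[w'(\lambda+\rho),\alpha^\vee]\ge\pair[\lambda+\rho,\alpha^\vee]$, with equality iff $c_\beta(w') = 0$ for every $\beta$ adjacent to $\alpha$ in $\mcg$.

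Applying the lemma summand by summand, any regular $\mu$ has $\alpha$-coefficient $\ge \pair[\lambda+\rho,\alpha^\vee]$ for each $\alpha$, so $\deg\mu\ge\deg(\lambda)$, with equality exactly when $\mu$ equals the exponent of $M^\lambda$. Tracing the equality case through the inductive step pins down the contributing tuples: the supports $S_i := \mathrm{supp}((\lambda+\rho)-w_i(\lambda+\rho))$ must form a set partition of $\sr$ into independent sets of $\mcg$, and each $w_i$ must be the commuting product $\prod_{\alpha\in S_i}s_\alpha$. Collecting the $k!$ orderings of each unordered partition and simplifying signs (using $\prod_i\varepsilon(w_i) = (-1)^{|\sr|}$) yields
\[
c(\kma,\lambda) = \sum_\pi (-1)^{|\sr|+k(\pi)}(k(\pi)-1)!,
\]
where $\pi$ ranges over set partitions of $\sr$ into independent sets of $\mcg$ and $k(\pi)$ is the number of blocks. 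This expression depends only on $\mcg$, so the $\lambda$-independence is immediate.

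The main remaining obstacle is non-negativity of $c(\kma,\lambda)$; integrality is clear term by term. For non-negativity I would match the formula with $(-1)^{|\sr|+1}P'(\mcg,0)$, where $P(\mcg,x) = \sum_\pi x(x-1)\cdots(x-k(\pi)+1)$ is the chromatic polynomial of $\mcg$, by differentiating each falling factorial at $0$. The classical sign-alternation of chromatic coefficients then yields $c(\kma,\lambda) = |a_1|$, the absolute value of the linear coefficient of $P(\mcg,x)$. This also produces the dichotomy $c(\kma,\lambda) = 0$ precisely when $\mcg$ is disconnected, consistent with what one expects for decomposable $\kma$; I anticipate that the positivity for connected $\mcg$ is the substance of the companion proposition \ref{keyfact}.
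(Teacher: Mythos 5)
Your proposal is correct. The computation of the coefficient of $M^\lambda$ is essentially the paper's own argument: your key lemma (each $c_\alpha(w)$ is either $0$ or $\geq \pair[\lambda+\rho,\alpha^\vee]$, with equality on all of the support forcing $w$ to be a product of commuting simple reflections over a totally disconnected set) is exactly the paper's lemma \ref{loglem}, proved by the same length induction (you multiply by a simple reflection on the left where the paper does so on the right, an immaterial difference), and your resulting formula $c(\kma,\lambda)=\sum_\pi(-1)^{|\sr|+k(\pi)}(k(\pi)-1)!$ is the paper's $(-1)^l\sum_k(-1)^k c_k(\mcg)/k$ after grouping the $k!$ orderings of each unordered partition. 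Where you genuinely diverge is on non-negativity: the paper specializes to $\lambda=0$ and invokes the Weyl--Kac denominator identity to identify $c(\mcg)$ with the multiplicity of the root $\sum_{\alpha\in\sr}\alpha$ (proposition \ref{alter}), which is non-negative for free and yields the corollaries on root multiplicities at the end of \S\ref{four}; you instead recognize $c(\mcg)$ as $(-1)^{|\sr|+1}P'(\mcg,0)=|a_1|$ for the chromatic polynomial $P(\mcg,x)=\sum_\pi x(x-1)\cdots(x-k(\pi)+1)$ and appeal to Whitney's sign-alternation theorem. Your route is purely combinatorial, packages proposition \ref{keyfact} for free (the order of vanishing of $P(\mcg,x)$ at $x=0$ is the number of connected components), and in fact explains the paper's deletion--contraction recursion $c(\mcg)=c(\mcge)+c(\mcgo)$ of proposition \ref{algo} as the chromatic recursion for $|a_1|$; what it does not give is the representation-theoretic identification of $c(\mcg)$ as a root multiplicity. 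Both arguments are complete and correct.
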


\begin{proposition} \label{keyfact}
Letting $c(\kma):=c(\kma,\lambda)$, we have further that $c(\kma) \geq 1$ iff $\kma$ is indecomposable, or equivalently, iff the graph of $\kma$ is connected.
\end{proposition}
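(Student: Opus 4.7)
The plan is to compute $c(\kma)=c(\kma,0)$ explicitly using the Weyl-Kac denominator formula, reducing the proposition to a question about whether a specific element of the root lattice is a root.

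Take $\lambda=0$ (permissible by Proposition \ref{1}); then $M^0=\prod_{\alpha\in\sr}X_\alpha$ and $c(\kma)$ is the coefficient of $M^0$ in $(-\log U_0)^\#$. Using the denominator identity $U_0=\prod_{\alpha\in\rop}(1-e^{-\alpha})^{\mult \alpha}$, expand
\[
-\log U_0=\sum_{\alpha\in\rop}\mult(\alpha)\sum_{p\geq 1}\frac{e^{-p\alpha}}{p}.
\]
Setting $\sigma:=\sum_{\alpha\in\sr}\alpha$, the unique pair $(\alpha,p)$ with $\alpha\in\rop$ and $p\geq 1$ satisfying $e^{-p\alpha}=M^0$ is $(\sigma,1)$. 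Hence $c(\kma)=\mult(\sigma)$, with the convention $\mult(\sigma)=0$ when $\sigma\notin\rop$. The proposition reduces to the equivalence: $\sigma$ is a positive root iff $\mcg$ is connected.

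The ``only if'' direction is immediate, since the support of any root is a connected subset of $\sr$; if $\mcg$ is disconnected then $\operatorname{supp}(\sigma)=\sr$ is disconnected, so $\sigma$ cannot be a root. For ``if'', I would induct on $n=|\sr|$. Compute $\langle\sigma,\beta^\vee\rangle=2-\sum_{\gamma\sim\beta}|\langle\gamma,\beta^\vee\rangle|$ for each $\beta\in\sr$ and split cases. If this is $\leq 0$ for all $\beta$, then $\sigma$ lies in Kac's fundamental imaginary chamber (its support is the connected set $\sr$), so $\sigma$ is a positive imaginary root by \cite[Thm.~5.4]{kac}. Otherwise some $\langle\sigma,\beta^\vee\rangle>0$, and since each $|\langle\gamma,\beta^\vee\rangle|$ is a positive integer and $\beta$ has at least one neighbor, the inequality forces $\beta$ to be a leaf of $\mcg$ whose unique neighbor $\gamma$ satisfies $\langle\gamma,\beta^\vee\rangle=-1$; in particular $\langle\sigma,\beta^\vee\rangle=1$. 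Removing the leaf $\beta$ leaves a connected sub-diagram on $\sr\setminus\{\beta\}$, so by induction $\sigma':=\sigma-\beta$ is a positive root of the sub-Kac-Moody algebra and hence of $\kma$. Then $\sigma=s_\beta(\sigma')$ is a root (Weyl-translate of a root), and is positive because $\sigma\in Q^+$.

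The main obstacle is this final case analysis: one must verify that the two cases are exhaustive and, in the reflection case, that the offending vertex $\beta$ is necessarily a leaf (rather than merely a non-cut vertex), so that the reduced diagram remains connected and the induction closes. Kac's imaginary-root theorem is the input that handles all situations in which no single simple reflection achieves the descent.
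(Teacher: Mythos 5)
Your reduction of the proposition to the statement ``$\sigma=\sum_{\alpha\in\sr}\alpha$ is a root iff $\mcg$ is connected'' is exactly the paper's route: the authors also set $\lambda=0$, apply $-\log$ to the Weyl--Kac denominator identity, and identify $c(\kma)$ with $\mult(\sigma)$ (their Proposition \ref{alter}). Where you diverge is in proving that $\sigma$ is a root when $\mcg$ is connected. The paper argues \emph{upward}: order the simple roots $\alpha_1,\dots,\alpha_l$ so that each partial sum $\beta_j$ satisfies $\form[\beta_j,\alpha_{j+1}]<0$, note that $\beta_j-\alpha_{j+1}$ has mixed signs and so is not a root, and conclude $\beta_{j+1}\in\ro$ by the $\mathfrak{sl}_2$-string argument. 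You argue \emph{downward}, splitting into the case where $\sigma$ lies in the fundamental imaginary chamber (invoking Kac's Theorem 5.4) and the case where some $\langle\sigma,\beta^\vee\rangle>0$ forces $\beta$ to be a leaf, after which you reflect and induct. Your case analysis is sound --- the inequality $\sum_{\gamma\sim\beta}|\langle\gamma,\beta^\vee\rangle|\le 1$ does force $\beta$ to be a leaf with Cartan integer $-1$, so the reduced diagram stays connected --- but your argument is heavier: it needs the classification of imaginary roots and the standard (unstated) fact that roots of the subalgebra attached to a subset of $\sr$ remain roots of $\kma$, whereas the paper's string argument needs only Lemma 1.6 of Kac and basic $\mathfrak{sl}_2$ theory. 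You also do not touch the paper's second, purely combinatorial proof of this proposition, which proceeds by a deletion--contraction recursion $c(\mcg)=c(\mcge)+c(\mcgo)$ on the graph and avoids root multiplicities entirely; that alternative is what yields the explicit values $c(\mcg)=1$ for trees and $c(A_n^{(1)})=n$.
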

Thus, when $\kma$ is indecomposable, the above propositions imply that $M^\lambda$ is the unique regular monomial of
 minimal degree appearing with nonzero coefficient in $\log U_\lambda$. When $\kma$ is a finite dimensional simple Lie algebra, we will 
in fact  
show (\S \ref{four}) that $c(\kma)=1$.
We defer the proofs of propositions \ref{1} and \ref{keyfact} to section \ref{four}. We first deduce a unique factorization theorem for Weyl numerators (see also \cite[theorem 2]{rajan}), and use this to prove our main theorem.

\begin{theorem}\label{uniqfact}
 Let $\kma$ be an indecomposable symmetrizable Kac-Moody algebra. Let $n,m$ be positive integers and suppose
$\lambda_1,\cdots,\lambda_n,\, \mu_1,\cdots,\mu_m \in P^{+}$ are such that the following identity holds in $\mathcal{A}$:
\begin{equation}\label{ul}
 U_{\lambda_1} \cdots U_{\lambda_n}=U_{\mu_1}\cdots U_{\mu_m}.
\end{equation}
Then $n=m$, and there is a permutation $\sigma$ of the set $\{1,2, \cdots,n\}$, such that $U_{\lambda_i}=U_{\mu_{\sigma(i)}}, 1\leq i\leq n$.
\end{theorem}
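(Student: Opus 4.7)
The plan is to apply logarithms to both sides of \eqref{ul}, extract the lowest-degree regular monomial, use Propositions \ref{1} and \ref{keyfact} to match one factor at a time, and induct on $n+m$.

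Since each $U_\nu$ has constant term $1$, $\log U_\nu$ will be a well-defined element of $\mathcal{A}$ and $\log$ converts products to sums. Taking logarithms of \eqref{ul} and applying the $\complex$-linear regular-part projection, one obtains
$$\sum_{i=1}^{n} (-\log U_{\lambda_i})^\# \;=\; \sum_{j=1}^{m} (-\log U_{\mu_j})^\#.$$
By Propositions \ref{1} and \ref{keyfact}, for each $\nu \in P^+$ the series $(-\log U_\nu)^\#$ begins in degree $\deg(\nu)$ with the single monomial $c(\kma)\,M^\nu$, where $c(\kma) \geq 1$, and has no regular terms of smaller degree. So when $n \geq 1$, setting $K_L := \min_i \deg(\lambda_i)$, the degree-$K_L$ piece of the LHS will be
$$c(\kma) \sum_{i:\, \deg(\lambda_i) = K_L} M^{\lambda_i},$$
which is manifestly nonzero (a sum of regular monomials, each appearing with positive integer coefficient $c(\kma)$ times its multiplicity). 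Letting $K_R$ denote the analogue for the RHS, this nonvanishing forces $m \geq 1$ and $K_R \leq K_L$; symmetry then yields $K_L = K_R =: K$. Equating degree-$K$ parts and dividing by $c(\kma) > 0$,
$$\sum_{i:\, \deg(\lambda_i) = K} M^{\lambda_i} \;=\; \sum_{j:\, \deg(\mu_j) = K} M^{\mu_j},$$
and $\complex$-linear independence of distinct monomials will identify the two multisets on either side. Picking any $i_0$ attaining $\deg(\lambda_{i_0}) = K$ then produces some $j_0$ with $M^{\lambda_{i_0}} = M^{\mu_{j_0}}$, which Lemma \ref{easy} upgrades to $U_{\lambda_{i_0}} = U_{\mu_{j_0}}$.

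The argument will then close by induction on $n+m$. The base $n+m=0$ is vacuous, and if $n+m \geq 1$ one may assume $n \geq 1$ (else swap sides and apply the above); the previous step supplies matching $i_0, j_0$, and since $\mathcal{A}$ is an integral domain with $U_{\lambda_{i_0}}$ of constant term $1$, one cancels the equal factors $U_{\lambda_{i_0}} = U_{\mu_{j_0}}$ from both sides of \eqref{ul} and invokes the inductive hypothesis on the smaller identity. The main obstacle is ensuring that the degree-$K$ leading piece on each side is genuinely nonzero, preventing spurious cancellation within a side and forcing the multiset identification above. This is precisely what Proposition \ref{keyfact} — and hence the indecomposability hypothesis on $\kma$ — delivers via $c(\kma) \geq 1$; without it, the leading term would carry no information and the entire scheme would collapse, consistent with the genuine failure of the statement for decomposable $\kma$.
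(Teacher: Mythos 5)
Your proposal is correct and follows essentially the same route as the paper: take $-\log$ of \eqref{ul}, pass to regular parts, use Propositions \ref{1} and \ref{keyfact} to see that the minimal-degree regular monomial $M^{\lambda_{i_0}}$ survives with positive coefficient and hence must equal some $M^{\mu_{j_0}}$, upgrade to $U_{\lambda_{i_0}}=U_{\mu_{j_0}}$ via Lemma \ref{easy}, cancel, and induct. The only cosmetic difference is that you track the two minima $K_L,K_R$ separately and induct on $n+m$, while the paper takes the overall minimum at the outset; the substance is identical.
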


\begin{proof}
Let $a:=\min (\{\deg(\lambda_i): 1 \leq i \leq n\} \cup \{\deg(\mu_j): 1 \leq j \leq m\})$. We can assume without loss of generality that $\deg(\lambda_1) = a$.
Now apply the operator $-\log$ to equation \eqref{ul} and consider the regular monomials on both sides :
\beq \label{logulh}
\sum\limits_{i=1}^n (-\log U_{\lambda_i})^\#=\sum\limits_{j=1}^m (- \log U_{\mu_j})^\#. 
\eeq
By propositions \ref{1} and \ref{keyfact}, it is clear that $M^{\lambda_1}$ occurs on the left hand side of equation \eqref{logulh} with nonzero coefficient. Since all $\mu_j$'s have degree $\geq a$, there must exist $1 \leq j \leq m$ for which $M^{\mu_j} = M^{\lambda_1}$. By lemma \ref{easy}, $U_{\lambda_1} = U_{\mu_j}$. Cancelling these terms and proceeding by induction, we obtain the desired conclusion.
\end{proof} 

We now complete the proof of theorem \ref{mainthm}. Given irreducible $\kma$-modules $V_i, W_j$ as in equation \eqref{maineq}, we let $\lambda_i, \mu_j$ be dominant integral weights such that $V_i=L(\lambda_i)$ and $W_j=L(\mu_j)$ for $1 \leq i,j \leq n$. Observe that 
(a) all weights of the module $\bigotimes\limits_{i=1}^n V_i$ are $\leq \sum_{i=1}^n \lambda_i$ where $\leq$ is the usual partial order on the weight lattice, and (b) $\sum_{i=1}^n \lambda_i$ is a weight of  this module. Thus, comparing highest weights of the modules $\otimes_i V_i$ and $\otimes_j W_j$, we conclude $\sum_{i=1}^n \lambda_i = \sum_{j=1}^n \mu_j =: \beta$ (say). Taking formal characters of the modules 
in equation \eqref{maineq} one obtains:
$$\prod_{i=1}^n \ch(L(\lambda_i)) = \prod_{j=1}^n \ch(L(\mu_j))$$
Multiplying both sides by $e^{-\beta} U_0^n$ and using the Weyl-Kac character formula (equation \eqref{wk}), we get  $U_{\lambda_1} \cdots U_{\lambda_n}=U_{\mu_1}\cdots U_{\mu_n}$. Theorem \ref{uniqfact} and lemma \ref{easy} now complete the proof. \qed

\section{Proof of propositions \ref{1} and \ref{keyfact}} \label{four}
Throughout this section, we fix a dominant integral weight $\lambda$ of $\kma$. 
\subsection{}
Let $a_\alpha:=\langle \lambda+\rho, \alpha^{\vee} \rangle \in \integers_{>0}$ for each $\alpha \in \sr$; 
thus $M^{\lambda}:=\prod\limits_{\alpha \in \sr}X^{a_\alpha}_\alpha$. 
We write 
\begin{equation} \label{defn}
\lambda+\rho-w(\lambda+\rho)=\sum\limits_{\alpha\in \sr}c_\alpha(w)\alpha
\end{equation}
where $c_\alpha(w)\in \mathbb{Z}_{\geq 0}$, and define $X(w):=\prod\limits_{\alpha \in \sr}X^{c_\alpha(w)}_\alpha = e^{w(\lambda+\rho) - (\lambda+\rho)}$ .

For $w \in W$, let $\mathbf{w}$ denote a reduced word for $w$. We define 
$I(w):=\{\alpha\in \sr: s_\alpha \text{ appears in } \mathbf{w}\}$; 
this is a well defined subset of $\sr$, since $I(w)$ is independent of the reduced word chosen \cite{humphreys}. 
A non-empty subset $K \subset \sr$ is said to be {\em totally disconnected} if $\form[\alpha,\beta]=0$ for all distinct $\alpha, \beta \in K$, i.e., there are no edges in 
$\mcg$ between vertices of $K$. Let $\mathcal{I}:=\{w\in W\backslash \{e\}: I(w) \text{ is totally disconnected}\}$.  Given a totally disconnected subset $K$ of $\sr$, there is a unique element $w(K) \in \mathcal{I}$ with $I(w(K)) = K$; it is clear that $w(K)$ is just the product of the commuting simple reflections $\{s_\alpha: \alpha \in K\}$. Thus, $\mathcal{I}$ is in natural bijection with the set of all totally disconnected subsets of $\sr$.  Note that the 
elements of $\mathcal{I}$ are involutions in $W$. 
We now have the following key lemma.

\begin{lemma} \label{loglem}
Let $w\in W$. Then
\be
\item[(a)] $I(w)=\{\alpha \in \sr: c_\alpha(w) \neq 0\}$, i.e.,  $X(w)=\prod_{\alpha \in I(w)}X^{c_\alpha(w)}_\alpha$.
\item[(b)] $c_\alpha(w)\geq a_\alpha$ for all $\alpha \in I(w)$.
\item[(c)] If $w \in \mathcal{I}$, then $c_\alpha(w)= a_\alpha$ for all $\alpha \in I(w)$.
\item[(d)] If $w\notin \mathcal{I} \cup \{e\}$, then there exists $\beta \in I(w)$ such that $c_{\beta}(w)> a_\beta$.
\ee
\end{lemma}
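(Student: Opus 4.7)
The plan is to treat parts (a)--(d) in order, each building on the earlier ones. The common tool is the reflection formula $s_\delta(\mu) = \mu - \pair[\mu,\delta^\vee]\,\delta$ applied to the strictly dominant weight $\lambda+\rho$, combined with induction on $\ell(w)$ and careful use of a reduced expression for $w$.

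For (a), the inclusion $\{\alpha : c_\alpha(w) \neq 0\} \subseteq I(w)$ is immediate: if $\alpha \notin I(w)$ then $w$ lies in the parabolic subgroup $W_{\sr \setminus \{\alpha\}}$, and a quick induction on $\ell(w)$ shows $(\lambda+\rho) - w(\lambda+\rho)$ is a $\mathbb{Z}$-linear combination of simple roots different from $\alpha$. For the reverse inclusion, write $w = s_\alpha w'$ reduced and verify the recursion $c_\alpha(w) = c_\alpha(w') + \pair[\lambda+\rho, (w')^{-1}\alpha^\vee]$; the second summand is strictly positive, since the reduced-word hypothesis forces $(w')^{-1}\alpha$ to be a positive real root and $\lambda+\rho$ is strictly dominant.

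Part (b) is the technical heart, and I expect it to be the main obstacle: the non-trivial content is identifying $a_\alpha$ (rather than just $1$) as a lower bound. The key idea is a \emph{last-occurrence} trick. Fix a reduced expression $w = s_{\beta_1}\cdots s_{\beta_k}$ and let $i^\ast$ be the largest index with $\beta_{i^\ast} = \alpha$. Telescoping the intermediate weights $\nu_i := s_{\beta_{i+1}}\cdots s_{\beta_k}(\lambda+\rho)$ yields
\[
(\lambda+\rho) - w(\lambda+\rho) = \sum_{i=1}^k d_i\,\beta_i, \qquad d_i = \pair[\lambda+\rho,\gamma_i^\vee],
\]
where $\gamma_i = s_{\beta_k}\cdots s_{\beta_{i+1}}\beta_i$ is a positive real root (suffixes of reduced words are reduced) and hence $d_i \in \mathbb{Z}_{>0}$. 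Thus $c_\alpha(w) \ge d_{i^\ast}$. Now set $v = s_{\beta_k}\cdots s_{\beta_{i^\ast+1}}$, which by the choice of $i^\ast$ lies in $W_{\sr \setminus \{\alpha\}}$. Applying part (a) to $v^{-1}$ in this parabolic writes $v^{-1}(\lambda+\rho) = \lambda+\rho - \sum_{\delta \ne \alpha} c_\delta(v^{-1})\,\delta$ with non-negative coefficients, and pairing with $\alpha^\vee$ together with the non-positivity of Cartan integers between distinct simple roots gives
\[
d_{i^\ast} = \pair[v^{-1}(\lambda+\rho),\alpha^\vee] = a_\alpha - \sum_{\delta \ne \alpha} c_\delta(v^{-1})\pair[\delta,\alpha^\vee] \ge a_\alpha.
\]

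Part (c) is a direct computation: the simple reflections $\{s_\alpha : \alpha \in K\}$ for a totally disconnected $K = I(w)$ pairwise commute, and each acts on $\lambda+\rho$ independently by subtracting $a_\alpha\alpha$. For (d) I would argue by contrapositive: assume $c_\beta(w) = a_\beta$ for every $\beta \in I(w)$, so by (a) we have $w(\lambda+\rho) = \lambda+\rho - \sum_{\beta \in I(w)} a_\beta\,\beta$. Applying the $W$-invariant form to $\form[w(\lambda+\rho),w(\lambda+\rho)] = \form[\lambda+\rho,\lambda+\rho]$ and using $\form[\lambda+\rho,\beta] = \tfrac{a_\beta}{2}\form[\beta,\beta]$, all diagonal contributions cancel and the identity collapses to
\[
\sum_{\substack{\beta,\gamma \in I(w)\\ \beta \ne \gamma}} a_\beta a_\gamma\,\form[\beta,\gamma] = 0.
\]
Since $a_\beta a_\gamma > 0$ and $\form[\beta,\gamma] \le 0$ for distinct simple roots, every summand must vanish, forcing $I(w)$ to be totally disconnected. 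Hence $w \in \mathcal{I} \cup \{e\}$, contradicting the hypothesis and completing the contrapositive.
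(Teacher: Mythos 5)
Your proof is correct, but it reaches parts (b) and (d) by a genuinely different route than the paper. The paper runs a single induction on $l(w)$ via the right-handed factorization $w=\sigma s_\alpha$ and the identity $(\lambda+\rho)-w(\lambda+\rho)=\bigl((\lambda+\rho)-\sigma(\lambda+\rho)\bigr)+a_\alpha\,\sigma\alpha$, deducing (a) from (b), and proving (d) by a second induction that exploits the structure of $\mathcal{I}$: when $\sigma\in\mathcal{I}$ and $I(w)=I(\sigma)\sqcup\{\alpha\}$ is not totally disconnected, $\sigma\alpha-\alpha$ is a nonzero element of the $\integers_{\geq0}$-span of $I(\sigma)$, which produces the strict excess. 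Your (b) instead uses the inversion-set expansion $(\lambda+\rho)-w(\lambda+\rho)=\sum_i\pair[\lambda+\rho,\gamma_i^\vee]\beta_i$ and isolates the last occurrence of $\alpha$, landing the bound $d_{i^\ast}\geq a_\alpha$ through a parabolic in $\sr\setminus\{\alpha\}$ and the non-positivity of off-diagonal Cartan integers; and your (d) is a non-inductive norm computation with the $W$-invariant form, which cleanly characterizes when equality holds in (b) for all of $I(w)$ and makes explicit where symmetrizability enters (the paper's inductive proof of (d) does not need the form at all). The paper's argument is shorter and uniform; yours isolates the exact source of the inequality and gives (d) as an equality-case analysis. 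One presentational point: for the reverse inclusion in (a), the recursion $w=s_\alpha w'$ only directly handles those $\alpha$ with $l(s_\alpha w)<l(w)$, so you need to run the induction (using $c_\beta(w)=c_\beta(w')$ for $\beta\neq\alpha$ and $I(w)=\{\alpha\}\cup I(w')$) to cover all of $I(w)$ --- or simply note that this inclusion is subsumed by (b) since $a_\alpha\geq1$; this is a matter of wording, not a gap.
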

\begin{proof}
We set $\gamma:=\lambda + \rho$. First, observe that (c) follows immediately from definitions. Further, equation \eqref{defn} shows that $c_\alpha(w)=0$ for all $\alpha \notin I(w)$. Thus (a) follows from (b). We now prove (b) by induction on $l(w)$. If $w=e$, then (b) is trivially true. Suppose that $l(w) \geq 1$, write $w = \sigma s_{\alpha}$ with $l(\sigma) = l(w) -1$ and $\alpha \in \sr$. This implies $\sigma(\alpha) \in \rop$. Now 
\begin{equation} \label{gwg}
\gamma - w\gamma = (\gamma - \sigma \gamma) + \sigma (\gamma - s_\alpha \gamma) = (\gamma - \sigma \gamma) + a_\alpha \, \sigma \alpha
\end{equation}
Now, either (i) $I(w)=I(\sigma)$ or (ii) $I(w) = I(\sigma) \sqcup \{\alpha\}$. In case (i), we are done by the induction hypothesis. If (ii) holds, observe that $\sigma \alpha = \alpha + \alpha'$ for some $\alpha'$ in the $\integers_{\geq 0}\text{-span of }I(\sigma)$. Equation \eqref{gwg} and the induction hypothesis now complete the proof of (b).

The proof of (d) is along similar lines, by induction on $l(w)$. Observe $l(w) \geq 2$ since $w \notin \mathcal{I} \cup \{e\}$. Write $w = \sigma s_\alpha$ as above. If $\sigma \notin \mathcal{I}$, then the result follows by the induction hypothesis and the fact that $I(\sigma) \subset I(w)$. If $\sigma \in \mathcal{I}$, then clearly $I(w) \neq I(\sigma)$ and so $I(w) = I(\sigma) \sqcup \{\alpha\}$. Since $I(w)$ is not totally disconnected, we must have $\sigma \alpha \neq \alpha$, i.e., $\sigma \alpha = \alpha + \alpha'$ for some {\em non-zero} $\alpha' \in \integers_{\geq 0}\text{-span of }I(\sigma)$. We are again done by (c) and equation \eqref{gwg}.
\end{proof}

\subsection{} \label{abstrgr}
We now make the following useful definition.
\begin{definition}
Let $k$ be a positive integer. A $k$-partition of the graph $\mcg$ is an ordered
 $k$-tuple $(J_1,...,J_k)$ such that 
(a) the $J_i$'s are non-empty pairwise disjoint subsets of the vertex set $\sr$ of $\mcg$,
(b) $\bigcup\limits_{i=1}^k J_i=\sr$, and 
(c) each $J_i$ is a totally disconnected subset of $\sr$. 
\end{definition} 
We let $P_k(\mcg)$ denote the set of $k$-partitions of $\mcg$ and
 $c_k(\mcg) := \mid P_k(\mcg) \mid$. We also define
\begin{equation} \label{cdef}
c(\mcg) := (-1)^l\sum\limits_{k=1}^l (-1)^k \, \frac{c_k(\mcg)}{k}
\end{equation}
 where $l = |\sr|$ is the cardinality of the vertex set of $\mcg$. 
 Finally, given 
$\mcj:=(J_1,\cdots,J_k)$ in  $P_k(\mcg)$, 
define $w(\mcj):=w({J_1}) \,w({J_2}) \cdots w({J_k})$ (this 
is in fact a Coxeter element of $W$).
\subsection{}
We now proceed to analyze $(-\log U_\lambda)^\#$. Write $U_\lambda=1-\xi$, where 
$$\xi:= - \sum\limits_{w\in W\setminus\{e\}} \varepsilon(w) X(w)= \xi_1  + \xi_2$$
 with  $\xi_1 := - \sum\limits_{w\in \mathcal{I}} \varepsilon(w) X(w)$ and 
$\xi_2:= - \sum\limits_{w\notin \mathcal{I}\cup \{e\}}\varepsilon(w)X(w)$.

Since $- \log U_\lambda =\xi +\xi^2/2+...+\xi^k/k+ \cdots$, lemma \ref{loglem} clearly implies that any {\em regular} monomial $\kappa= \prod\limits_{\alpha \in \sr}X^{p_\alpha}_\alpha$ that occurs in $- \log U_\lambda$ must satisfy $p_\alpha \geq a_\alpha$ for all $\alpha \in \sr$. It further implies that there is no contribution of $\xi_2$ to the coefficient of 
the regular monomial $M^\lambda = \prod\limits_{\alpha \in \sr}X^{a_\alpha}_\alpha$, i.e., $M^\lambda$ occurs with the same coefficient in $-\log (1-\xi)$ and in $-\log(1-\xi_1)$.
Thus,  the coefficient of $M^\lambda$ in $-\log(U_\lambda)$ is:
\begin{equation*}
\sum\limits_{k \geq 1}\sum\limits_{\mcj \in P_k(\mcg)} \frac{(-1)^k}{k}\,\varepsilon(w(\mcj))
\end{equation*}
Since $\varepsilon(w(\mcj)) = (-1)^l$ for all $\mcj \in P_k(\mcg)$ and all $k \geq 1$, we deduce that this coefficient is equal to $c(\mcg)$. Thus $c(\kma,\lambda)=c(\mcg)$ only depends on the graph 
$\mcg$ of $\kma$. This establishes all assertions of proposition \ref{1}, except for the non-negative 
integrality of the coefficient $c(\mcg)$. This will be established in proposition \ref{alter} below.

\subsection{} In this section, we obtain another characterization of $c(\mcg)$.
Since $c(\mcg) = c(\kma,\lambda)$ is independent of $\lambda$, we can take $\lambda =0$. Thus, $c(\mcg)$ is the coefficient of $M^0$ in $-\log U_0$. Now, by the Weyl-Kac denominator identity, we have
$$U_0 = \sum_{w \in W} \varepsilon(w) e^{w\rho - \rho} = \prod_{\beta \in \rop} (1 - e^{-\beta})^{\mult \beta}$$
where $\mult \beta$ is the root multiplicity of $\beta$.
So $$-\log U_0 = \sum_{\beta \in \rop} \mult \beta \, \sum_{k \geq 1} \frac{e^{-k\beta}}{k}$$
Since $M^0 = \prod_{\alpha \in \sr} X_\alpha = e^{-\sum_{\alpha \in \sr} \alpha}$, we have thus proved:
\begin{proposition} \label{alter}
$c(\mcg)$ is the multiplicity of the root $\sum\limits_{\alpha \in \sr} \alpha$ in $\kma$.
Thus $c(\mcg) \in \integers_{\geq 0}$.
\end{proposition}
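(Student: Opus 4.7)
The plan is essentially to read off the conclusion from the computation already set up in Section 4.4 of the excerpt. By the discussion preceding the statement, $c(\mcg) = c(\kma, \lambda)$ is independent of $\lambda$, so it suffices to identify it as the coefficient of $M^0 = e^{-\sum_{\alpha \in \sr}\alpha}$ in $-\log U_0$, and then use the Weyl--Kac denominator identity to rewrite this logarithm as a sum indexed by positive roots.

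Specifically, starting from $U_0 = \prod_{\beta \in \rop}(1-e^{-\beta})^{\mult \beta}$, I would take $-\log$ of both sides and expand each factor as a geometric-type series, yielding
\[
-\log U_0 \;=\; \sum_{\beta \in \rop} \mult\beta \sum_{k \geq 1} \frac{e^{-k\beta}}{k}.
\]
The coefficient of $e^{-\sum_{\alpha \in \sr}\alpha}$ in this sum comes from terms where $k\beta = \sum_{\alpha \in \sr}\alpha$. Writing $\beta = \sum_{\alpha \in \sr} n_\alpha \alpha$ with $n_\alpha \in \integers_{\geq 0}$, this forces $k n_\alpha = 1$ for every $\alpha \in \sr$, hence $k = 1$ and $n_\alpha = 1$ for all $\alpha$. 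So the unique contributing term corresponds to $\beta = \sum_{\alpha \in \sr}\alpha$ (contributing $0$ if this element of $Q^+$ is not a root), and the coefficient is precisely $\mult\bigl(\sum_{\alpha \in \sr}\alpha\bigr)$.

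This gives the first assertion of the proposition, and the second assertion (non-negative integrality of $c(\mcg)$) is then immediate, since root multiplicities of Kac--Moody algebras are non-negative integers by definition. There is no substantive obstacle here; the only point requiring a moment's thought is the uniqueness of the decomposition $\sum_{\alpha \in \sr}\alpha = k\beta$ with $\beta \in Q^+$, which is forced by the fact that every simple root appears with coefficient exactly $1$ on the left.
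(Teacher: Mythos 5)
Your proposal is correct and follows essentially the same route as the paper: set $\lambda=0$, apply the Weyl--Kac denominator identity to write $-\log U_0 = \sum_{\beta \in \rop} \mult\beta \sum_{k \geq 1} e^{-k\beta}/k$, and read off the coefficient of $M^0 = e^{-\sum_{\alpha\in\sr}\alpha}$. Your extra remark about why only $k=1$ and $\beta=\sum_{\alpha\in\sr}\alpha$ can contribute is a useful explicit justification of a step the paper leaves implicit.
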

The following statement about roots is well-known, but is included for completeness sake.
\begin{proposition} \label{multprop}
$\sum_{\alpha \in \sr} \alpha$ is a root of $\kma \Leftrightarrow \kma$ is indecomposable. 
\end{proposition}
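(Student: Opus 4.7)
The plan is to prove both directions separately, with the forward direction essentially by inspection and the reverse direction by induction on the rank.

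For the easy direction, suppose $\mathfrak{g}$ is decomposable. Then the simple roots partition as $\Pi = \Pi_1 \sqcup \Pi_2$ with $(\alpha,\beta)=0$ for all $\alpha \in \Pi_1, \beta \in \Pi_2$, and correspondingly $\mathfrak{g}$ decomposes as a direct sum of ideals $\mathfrak{g}_1 \oplus \mathfrak{g}_2$ (modulo center issues, which do not affect the root system). Every root of $\mathfrak{g}$ lies in the $\mathbb{Z}$-span of either $\Pi_1$ or $\Pi_2$, but $\sum_{\alpha \in \Pi} \alpha$ has strictly positive coefficients on both sides of the partition, so it cannot be a root.

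For the converse, assume $\mathcal{G}$ is connected, and proceed by induction on $l=|\Pi|$. The case $l=1$ is trivial since $\Pi = \{\alpha\}$ and $\alpha$ itself is a simple root. For $l \geq 2$, I need to pick $\alpha \in \Pi$ such that the subdiagram on $\Pi' := \Pi \setminus \{\alpha\}$ is still connected. This is possible because any connected graph with at least two vertices admits such a vertex: take a spanning tree $T$ of $\mathcal{G}$ and let $\alpha$ be a leaf of $T$; then $T \setminus \{\alpha\}$ is a spanning tree of $\mathcal{G} \setminus \{\alpha\}$, which is therefore connected. Let $\mathfrak{g}'$ be the Kac-Moody subalgebra of $\mathfrak{g}$ associated to $\Pi'$; by the inductive hypothesis, $\beta := \sum_{\gamma \in \Pi'} \gamma$ is a root of $\mathfrak{g}'$, hence of $\mathfrak{g}$.

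Now I need to bump $\beta$ up by $\alpha$ to get $\beta + \alpha = \sum_{\gamma \in \Pi} \gamma$, and this is where the main substantive step occurs. I would use the $\alpha$-root string through $\beta$, which is unbroken in the Kac-Moody setting for the simple root $\alpha$ (cf.\ Kac, Proposition 5.1): it has the form $\beta - q\alpha, \ldots, \beta + p\alpha$ with $p - q = -\pair[\beta,\alpha^\vee]$. Since $\beta$ has zero coefficient on $\alpha$ and non-negative coefficients on $\Pi'$, the element $\beta - \alpha$ cannot be a root (its $\alpha$-coefficient would be $-1$ while other coefficients are $\geq 0$), forcing $q=0$. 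Thus $p = -\pair[\beta,\alpha^\vee] = -\sum_{\gamma \in \Pi'} \pair[\gamma,\alpha^\vee]$. Each term in this sum is $\leq 0$ (off-diagonal Cartan entries), and because $\mathcal{G}$ is connected, at least one $\gamma_0 \in \Pi'$ is joined to $\alpha$ in $\mathcal{G}$, giving $\pair[\gamma_0, \alpha^\vee] < 0$. Therefore $p \geq 1$, so $\beta + \alpha$ is a root, completing the induction.

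The main obstacle is justifying the unbroken root-string property for a simple root acting on a positive root in the full symmetrizable Kac-Moody generality; this is standard but should be cited precisely. Everything else is a straightforward graph-theoretic and bookkeeping argument.
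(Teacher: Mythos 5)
Your proof is correct and is essentially the paper's own argument: the paper orders the simple roots so that each partial sum $\beta_j=\sum_{i\leq j}\alpha_i$ has connected support and satisfies $(\beta_j,\alpha_{j+1})<0$, which is exactly your ``delete a leaf of a spanning tree'' induction read in reverse, and the decisive step ($\beta-\alpha\notin\Delta$ together with $\langle\beta,\alpha^\vee\rangle<0$ forces $\beta+\alpha\in\Delta$) is the same unbroken root-string/$\mathfrak{sl}_2$ argument in both. The only cosmetic differences are that the paper runs the induction entirely inside $\kma$ (so it never needs to pass to the rank $l-1$ subalgebra on $\Pi'$ and back), and it disposes of the forward direction by citing the connectedness of root supports (Kac, Lemma 1.6) rather than the decomposition into commuting ideals.
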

\begin{proof}
One half ($\Rightarrow$) follows from \cite[Lemma 1.6]{kac}. For the converse, observe that the connectedness of $\mcg$ allows us to order the set $\sr$ of simple roots as $(\alpha_1, \alpha_2, \cdots, \alpha_l)$ such that
the partial sums $\beta_j :=\sum_{i=1}^j \alpha_i$ satisfy $\form[\beta_j,\alpha_{j+1}]<0$ for $1 \leq j <l = |\sr|$. 
Since $(\beta_j - \alpha_{j+1}) \notin \ro$, a standard $\mathfrak{sl}_2$ argument proves that $\beta_{j+1} \in \ro$ if $\beta_j \in \ro$. Since $\beta_1 \in \ro$, we conclude $\beta_l = \sum_{\alpha \in \sr} \alpha$ is also a root. 
\end{proof}

Finally, observe that propositions \ref{alter} and \ref{multprop} prove proposition \ref{keyfact}.

\subsection{} In this subsection, we obtain an algorithm for the computation of $c(\mcg)$, and give an alternate proof of proposition $\ref{keyfact}$. 

We note that the definitions of \S \ref{abstrgr} only need $\mcg$ to be an abstract graph. The main results of this subsection can be viewed as statements about abstract graphs.

\begin{proposition}
Let $\mcg$ be a graph containing at least two vertices, 
and $p$ a vertex of $\mcg$ 
that is adjacent to a unique vertex of $\mcg$. 
Let $\mcg'$ be the graph which is obtained from 
$\mcg$ by deleting the vertex $p$. Then $c(\mcg)=c(\mcg')$.
\end{proposition}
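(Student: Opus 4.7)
The plan is to recast $c(\mcg)$ as an extraction of a single coefficient from a logarithm of an independent-set generating function, and then to exploit the pendant property of $p$ to relate this coefficient to the corresponding one for $\mcg'$. Working in the truncated polynomial ring $R_\mcg := \mathbb{Q}[x_v : v \in V(\mcg)]/(x_v^2 : v \in V(\mcg))$, I introduce the \emph{independence polynomial} $Q(\mcg) := \sum_{S} x^S$ where $S$ ranges over all independent (i.e., totally disconnected) subsets of $V(\mcg)$, including $\emptyset$, and $x^S := \prod_{v \in S} x_v$. A direct counting argument in $R_\mcg$ shows that $[x^{V(\mcg)}] (Q(\mcg)-1)^k = c_k(\mcg)$, since the truncation forces the $k$ factors to be supported on pairwise disjoint subsets. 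Substituting into definition \eqref{cdef} and using $\log(1+y) = \sum_{k\geq 1}(-1)^{k+1}y^k/k$, one obtains the key identity
\[
c(\mcg) = (-1)^{|V(\mcg)|+1}\,[x^{V(\mcg)}] \log Q(\mcg).
\]

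The next step is to exploit that $p$'s only neighbor in $\mcg$ is $q$. Splitting independent sets by whether they contain $p$, one has the clean decomposition
\[
Q(\mcg) = Q(\mcg') + x_p\, Q(\mcg''), \qquad \mcg'' := \mcg \setminus\{p,q\}.
\]
Because $x_p^2=0$ in $R_\mcg$, taking $\log$ gives $\log Q(\mcg) \equiv \log Q(\mcg') + x_p\, Q(\mcg'')/Q(\mcg') \pmod{x_p^2}$. The first term contributes nothing to the coefficient of $x^{V(\mcg)} = x_p \cdot x^{V(\mcg')}$, so
\[
[x^{V(\mcg)}] \log Q(\mcg) \;=\; [x^{V(\mcg')}]\, Q(\mcg'')/Q(\mcg').
\]

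To finish, I apply the same ``split by a vertex'' trick to the pair $(\mcg', q)$, without assuming $q$ is pendant: splitting independent sets of $\mcg'$ according to whether they contain $q$ yields $Q(\mcg') = Q(\mcg'') + x_q R$, where $R := Q(\mcg'' \setminus N_{\mcg'}(q))$. Since $x_q^2 = 0$, one gets both
\[
Q(\mcg'')/Q(\mcg') \equiv 1 - x_q\, R/Q(\mcg'') \pmod{x_q^2}, \qquad \log Q(\mcg') \equiv \log Q(\mcg'') + x_q\, R/Q(\mcg'') \pmod{x_q^2}.
\]
Extracting coefficients gives $[x^{V(\mcg')}] Q(\mcg'')/Q(\mcg') = -[x^{V(\mcg'')}] R/Q(\mcg'') = -[x^{V(\mcg')}] \log Q(\mcg')$. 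Combining this with the previous display and the sign shift $|V(\mcg')| = |V(\mcg)|-1$ in the reformulation of $c$ yields $c(\mcg) = c(\mcg')$.

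The main subtlety I anticipate is getting the ``truncated'' bookkeeping cleanly right: I need to verify the generating-function reformulation of $c_k(\mcg)$ carefully (this is where the assumption that the $J_i$'s partitioning $V(\mcg)$ are disjoint is encoded by $x_v^2=0$), and I must be sure that the identity $Q(\mcg) = Q(\mcg') + x_p Q(\mcg'')$ really uses that $p$ has $q$ as its \emph{unique} neighbor, since otherwise the second summand would have to account for a nontrivial forbidden neighborhood. Notably, the second decomposition with respect to $q$ requires no such hypothesis, which is why we can reduce to an $\mcg'$ in which $q$ may have any degree. Once these points are settled, the computation is essentially formal.
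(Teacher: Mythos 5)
Your proof is correct, but it takes a genuinely different route from the paper's. The paper argues directly on the counts $c_k$: it classifies a $k$-partition of $\mcg$ by whether $\{p\}$ is a singleton part or $p$ sits inside a larger part, obtains the recurrence $c_k(\mcg)=kc_{k-1}(\mcg')+(k-1)c_k(\mcg')$, substitutes into the defining alternating sum for $c(\mcg)$, and observes that the difference $c(\mcg)-c(\mcg')$ telescopes to zero. You instead package the whole alternating sum at once via the identity $c(\mcg)=(-1)^{|V(\mcg)|+1}[x^{V(\mcg)}]\log Q(\mcg)$ in the ring with $x_v^2=0$ (which I checked: $[x^{V(\mcg)}](Q(\mcg)-1)^k=c_k(\mcg)$ is exactly the disjointness bookkeeping, and the sign matches \eqref{cdef}), then use the pendant decomposition $Q(\mcg)=Q(\mcg')+x_pQ(\mcg'')$ and a second, unconditional decomposition at $q$ to cancel the quotient $Q(\mcg'')/Q(\mcg')$ against $\log Q(\mcg')$; the sign flip from $x_q$ is absorbed by the parity shift $|V(\mcg')|=|V(\mcg)|-1$. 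Both arguments use the same underlying dichotomy at $p$, but yours trades the explicit recurrence and telescoping for formal manipulation of $\log$ of an independence polynomial; what this buys is a reformulation of $c(\mcg)$ that is the purely combinatorial shadow of the paper's own Proposition 4 (where $c(\mcg)$ is read off as a coefficient in $-\log U_0$), and it makes transparent why no actual cancellation bookkeeping is needed. The paper's version is more elementary (no power series) and yields the intermediate recurrence for the $c_k$ themselves, which your approach does not exhibit. One small point worth making explicit in a write-up: $Q(\mcg')$ is invertible in the truncated ring because its constant term is $1$ and the augmentation ideal is nilpotent, so the quotients you form are legitimate.
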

\begin{proof}
Let $q$ denote the unique vertex adjacent to $p$.
Consider $\mcj = (J_1, \cdots, J_k) \in P_k(\mcg)$. 
Then, there are only two (mutually exclusive) possibilities: (a) $J_i=\{p\}$ for some $i$, or (b) $p \in J_i$ for some $i$ for which $|J_i| \geq 2$. We enumerate the number of $k$-partitions of each type. If $\mcj$ is of type (a), then removing $J_i$ gives us a $(k-1)$-partition of $\mcg'$. Thus, the number of $\mcj$'s of type (a) is precisely 
$kc_{k-1}(\mcg')$, since there are $k$ possibilities for $i$.
Next, if $\mcj$ is of type (b), deleting $p$ from the part in which it occurs leaves us with a $k$-partition $\mcj'$ of $\mcg'$. Conversely given $\mcj' \in P_k(\mcg')$, the vertex $p$ can be inserted into any of the $k-1$ parts of $\mcj'$ which do not contain $q$. Thus the number of $\mcj$'s of type (b) is $(k-1)c_k(\mcg')$. Putting these together, we obtain for all $k \geq 1$:
$$c_k(\mcg)=kc_{k-1}(\mcg')+(k-1)c_k(\mcg')$$
where $c_0(\mcg'):=0$. Plugging this into equation \eqref{cdef}, we obtain $$c(\mcg) - c(\mcg') = (-1)^l \sum_{k \geq 1} (-1)^k (c_k(\mcg') + c_{k-1}(\mcg'))$$
where $l$ is the number of vertices in $\mcg$. Since $c_0(\mcg')=0$ and $c_k(\mcg')=0$ for all $k \geq l$, the telescoping sum on the right evaluates to 0.
\end{proof}

\begin{corollary} \label{tree} {\rm (1)} Let $\mcg$ be a tree. Then $c(\mcg)=1$. {\rm (2)} Let $\kma$ be an indecomposable Kac-Moody algebra of finite or affine type with $\kma \neq A_n^{(1)} (n \geq 2)$ (in the notation of Kac \cite{kac}). Then $c(\kma) =1$.
\end{corollary}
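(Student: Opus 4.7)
The plan is to prove the two parts of the corollary in succession, with part (2) following easily from part (1) plus a standard case-check of Kac-Moody diagrams.

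For part (1), I would proceed by induction on $l = |\sr|$, the number of vertices of the tree $\mcg$. The base case $l=1$ is a direct computation from the definition \eqref{cdef}: the only partition is $(\{\alpha\})$, so $c_1(\mcg) = 1$ and $c_k(\mcg) = 0$ for $k \geq 2$, giving $c(\mcg) = (-1)^1 \cdot (-1)^1 \cdot 1 = 1$. For the inductive step, since every tree with at least two vertices has a leaf, pick such a vertex $p$. Applying the preceding proposition, $c(\mcg) = c(\mcg')$ where $\mcg'$ is obtained by deleting $p$. But removing a leaf from a tree yields a tree on $l-1$ vertices, so by the induction hypothesis $c(\mcg') = 1$.

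For part (2), I would observe that for $\kma$ of finite or affine type, the underlying undirected graph of the Dynkin diagram, as defined in the paper, is a tree in all cases except $\kma = A_n^{(1)}$ with $n \geq 2$ (the only diagrams containing a cycle). This can be verified by inspecting the tables in \cite{kac}: finite type diagrams $A_n, B_n, C_n, D_n, E_{6,7,8}, F_4, G_2$ are all trees; affine diagrams $A_1^{(1)}$ (two vertices joined by an edge), $B_n^{(1)}, C_n^{(1)}, D_n^{(1)}, E_{6,7,8}^{(1)}, F_4^{(1)}, G_2^{(1)}$, and the twisted types are all trees; only $A_n^{(1)}$ with $n \geq 2$, which is a cycle, fails to be a tree. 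Hence under the hypothesis of the corollary, the graph $\mcg$ of $\kma$ is a tree, and applying part (1) together with the identity $c(\kma) = c(\mcg)$ established at the end of \S 4.3 gives $c(\kma) = 1$.

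The only mildly subtle step is the base case of part (1), where one must parse the definition \eqref{cdef} carefully with $l = 1$ to confirm the sign works out to $+1$. The case-by-case step in part (2) is routine but requires invoking the standard classification; I would phrase it as inspection of the affine Dynkin diagrams, since a cycle is the unique graph-theoretic obstruction to being a tree and $A_n^{(1)}$ ($n \geq 2$) accounts for all such cases.
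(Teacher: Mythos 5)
Your proof is correct and follows essentially the same route as the paper: part (1) by induction on the number of vertices using the leaf-deletion proposition (the paper leaves the base case implicit, which you verify explicitly), and part (2) by observing that all finite and affine graphs other than the $(n+1)$-cycle of $A_n^{(1)}$, $n\geq 2$, are trees.
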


\begin{proof}
The first part immediately follows from the above theorem by induction on the number of vertices of $\mcg$. The second follows from the first since for such $\kma$, the associated graph is a tree.
\end{proof}

Next, let $\mcg$ be a graph and $e$ be an edge in $\mcg$. Define 
$\mcge$ to be the graph obtained from
 $\mcg$ by deleting the edge $e$ alone (keeping all vertices, and edges other than $e$ intact). Let $\mcgo$ be the graph which is obtained from $\mcg$ by contraction of the edge $e$ \cite[\S 1.7]{diestel}, in other words, letting $p,q$ denote the vertices at the two ends of $e$, $\mcgo$ is constructed in two steps as follows: (i) delete the vertices $p, q$ of $\mcg$ and all edges incident on them; call this graph $\Gamma$, (ii) create a new vertex $r$; for each vertex $s$ in $\Gamma$, draw an edge between $r$ and $s$ iff $s$ was adjacent to either $p$ or $q$ (or both) in $\mcg$.

\begin{proposition} \label{algo}
With notation as above, $c(\mcg)=c(\mcge)+c(\mcgo)$.
\end{proposition}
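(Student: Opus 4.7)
The plan is to prove the identity by a direct bijective/counting argument at the level of the quantities $c_k(\mcg)$, $c_k(\mcge)$, $c_k(\mcgo)$, then plug into the alternating-sum definition \eqref{cdef}.

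Let $p,q$ denote the endpoints of $e$, and let $l=|\sr|$. Since $\mcge$ has the same vertex set as $\mcg$ but with one fewer edge, every totally disconnected subset of $\mcg$ is still totally disconnected in $\mcge$, so $P_k(\mcg)\subset P_k(\mcge)$. An element of $P_k(\mcge)$ fails to lie in $P_k(\mcg)$ precisely when some part contains both $p$ and $q$ (these are the only pairs of vertices whose adjacency differs between $\mcg$ and $\mcge$). I claim there is a natural bijection between $P_k(\mcgo)$ and the set $P_k(\mcge)\setminus P_k(\mcg)$: given $(J_1,\dots,J_k)\in P_k(\mcgo)$, let $J_i$ be the (unique) part containing the contracted vertex $r$, and replace $r$ by the two vertices $p$ and $q$, leaving the other parts unchanged. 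Conversely, given an element of $P_k(\mcge)\setminus P_k(\mcg)$, find the part $J_i$ containing both $p$ and $q$, and collapse $\{p,q\}$ to a single vertex $r$.

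The key point to verify is that this map preserves total disconnectedness. In $\mcgo$, a vertex $s\neq r$ is adjacent to $r$ iff $s$ was adjacent to $p$ or to $q$ in $\mcg$ (equivalently, in $\mcge$, since the only edge deleted was between $p$ and $q$). Hence, requiring the part containing $r$ to be totally disconnected in $\mcgo$ is equivalent to requiring the corresponding part $\{p,q\}\cup(J_i\setminus\{r\})$ to be totally disconnected in $\mcge$; note that here we crucially use that $p$ and $q$ are non-adjacent in $\mcge$. The bijection therefore yields
\begin{equation*}
c_k(\mcge)=c_k(\mcg)+c_k(\mcgo)\quad\text{for all }k\geq 1.
\end{equation*}

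Finally, substitute into \eqref{cdef}. Observing that $\mcge$ has $l$ vertices but $\mcgo$ has $l-1$ vertices, the sign factor $(-1)^{l-1}$ in $c(\mcgo)$ produces the correct sign to convert $-c_k(\mcgo)$ into a $+c(\mcgo)$ contribution:
\begin{equation*}
c(\mcg)=(-1)^l\sum_{k\geq 1}\frac{(-1)^k}{k}\bigl(c_k(\mcge)-c_k(\mcgo)\bigr)=c(\mcge)+c(\mcgo),
\end{equation*}
as desired. The only delicate step is checking the bijection carefully; once the correspondence between totally disconnected subsets of $\mcgo$ containing $r$ and totally disconnected subsets of $\mcge$ containing both $p$ and $q$ is set up correctly, the rest is a mechanical manipulation of the defining sum.
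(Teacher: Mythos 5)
Your proof is correct and follows essentially the same route as the paper: both establish the identity $c_k(\mcge)=c_k(\mcg)+c_k(\mcgo)$ by splitting the $k$-partitions of $\mcge$ according to whether $p$ and $q$ lie in the same part (the latter case corresponding bijectively to $k$-partitions of $\mcgo$ via contraction), and then substitute into \eqref{cdef} with the sign bookkeeping for the vertex counts $l$ and $l-1$. Your writeup is a bit more explicit about verifying that the bijection preserves total disconnectedness, but the argument is the same.
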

\begin{proof}
Let $l$ be the number of vertices in $\mcg$.
 Suppose $\mcj = (J_1, \cdots, J_k)$ is a $k$-partition of $\mcge$. 
Then either (a) $p$ and $q$ occur in different $J_i$'s, or (b) they occur in the same $J_i$. In case (a), $\mcj$ is also a $k$-partition of $\mcg$. In case (b), observe that if we delete $p$ and $q$ from $J_i$ and add $r$ to $J_i$, keeping the remaining $J_p$'s the same,  we obtain a $k$-partition of $\mcgo$. So we get $c_k(\mcg)+c_k(\mcgo)=c_k(\mcge)$ for all $k \geq 1$. From equation \eqref{cdef}, we get 
$$(-1)^l \sum_{k \geq 1} (-1)^k \frac{c_k(\mcg)}{k} = (-1)^l \sum_{k \geq 1} (-1)^k \frac{c_k(\mcge)}{k} + (-1)^{l-1} \sum_{k \geq 1} (-1)^k \frac{c_k(\mcgo)}{k}$$
Since the number of vertices in $\mcgo$ is $l-1$, this proves the proposition. 
\end{proof}

\begin{corollary}
 Let $\kma$ be the affine Kac-Moody algebra of type $A_n^{(1)}, n \geq 2$. Then $c(\kma)=n$.
\end{corollary}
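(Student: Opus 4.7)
The graph $\mcg$ associated to $A_n^{(1)}$ is the cycle $C_{n+1}$ on $n+1$ vertices. My plan is to establish, by induction on $n \geq 2$, that $c(C_{n+1}) = n$, using the deletion-contraction recurrence of Proposition \ref{algo}.

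The key computation is to analyze what happens to a cycle $C_m$ (for $m \geq 3$) when we apply Proposition \ref{algo} to an arbitrary edge $e$. First, $\mcge = C_m \setminus e$ is a path on $m$ vertices, which is a tree, so Corollary \ref{tree}(1) gives $c(\mcge) = 1$. Second, I need to identify $\mcgo$. If the vertices of $C_m$ are $v_0, v_1, \ldots, v_{m-1}$ with edges $v_i v_{i+1}$ (indices mod $m$), and we contract $e = v_0 v_1$ to a new vertex $r$, then the definition of edge contraction (which produces a simple graph) yields a new vertex $r$ adjacent precisely to $v_2$ and $v_{m-1}$, while $v_2, v_3, \ldots, v_{m-1}$ retain their path structure. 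For $m \geq 4$, the vertices $v_2$ and $v_{m-1}$ are distinct and non-adjacent in $C_m$, so $\mcgo$ is itself a cycle on $m-1$ vertices, namely $C_{m-1}$. For $m = 3$, however, $v_2 = v_{m-1}$, so $\mcgo$ collapses to a graph with two vertices joined by a single edge, which is again a tree.

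The base case is $n = 2$ (so $m = 3$): Proposition \ref{algo} combined with the two tree computations above yields $c(C_3) = 1 + 1 = 2$, as required. For the inductive step with $n \geq 3$, assume $c(C_n) = n - 1$. Then Proposition \ref{algo} applied to $C_{n+1}$ gives
\[
c(C_{n+1}) \;=\; c(\mcge) + c(\mcgo) \;=\; 1 + c(C_n) \;=\; 1 + (n-1) \;=\; n,
\]
completing the induction.

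The only nontrivial point is the verification that edge contraction on $C_{n+1}$ returns $C_n$ (and not a smaller or multi-edge graph) for $n \geq 3$; this is a direct unwinding of the definition in \S 4.5 and presents no real obstacle. As a sanity check, one could alternatively invoke Proposition \ref{alter}: the claim becomes that $\sum_{\alpha \in \sr}\alpha$ equals the null root $\delta$ of $A_n^{(1)}$, which has multiplicity $n$ (the rank of the underlying finite-type Cartan subalgebra), giving an independent confirmation.
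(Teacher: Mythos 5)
Your proof is correct and follows essentially the same route as the paper, which simply notes that the corollary ``follows from the above theorem [Proposition \ref{algo}] since the graph of $\kma$ is an $(n+1)$-cycle,'' and also records the same alternative via Proposition \ref{alter} and the null root. You have merely filled in the deletion--contraction induction (path gives $1$, contraction gives a shorter cycle) that the paper leaves implicit.
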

\begin{proof}
This follows from the above theorem since the graph of $\kma$ is an $(n+1)$-cycle. Alternatively, this also follows from 
proposition \ref{alter} since $\sum_{\alpha \in \sr} \alpha$ is just the null root of this affine root system, which has multiplicity $n$.
\end{proof}
Next, we give a purely combinatorial proof of proposition \ref{keyfact}.

\begin{proposition} 
 Let $\mcg$ be a graph.  (i) If $\mcg$ is connected, then $c(\mcg) >0$, and (ii) if $\mcg$ is disconnected, then 
$c(\mcg)=0$.
\end{proposition}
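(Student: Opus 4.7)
My plan is to handle the two halves separately, using Proposition~\ref{algo} together with Corollary~\ref{tree}(1) for~(i), and a multiplicativity/factorization argument across components for~(ii).

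For (i), I would run a lexicographic induction on the pair $(l,k)$, where $l = |\sr|$ is the vertex count and $k$ the edge count of $\mcg$. The base case $l = 1$ is immediate: a one-vertex graph is a tree, so $c(\mcg) = 1$ by Corollary~\ref{tree}(1). For $l \geq 2$ I induct on $k$. If $k = l - 1$, $\mcg$ is again a tree and Corollary~\ref{tree}(1) applies. Otherwise $\mcg$ contains a cycle; choose any edge $e$ lying on some cycle. Removing $e$ leaves $\mcge$ connected (the rest of the cycle supplies an alternate path between the endpoints of $e$) on $l$ vertices with $k - 1$ edges, so $c(\mcge) > 0$ by the secondary hypothesis. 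Contracting $e$ also preserves connectedness, so $\mcgo$ is connected on $l - 1$ vertices and $c(\mcgo) > 0$ by the primary hypothesis. Proposition~\ref{algo} then gives $c(\mcg) = c(\mcge) + c(\mcgo) > 0$.

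For (ii), I would exploit that the combinatorial data defining $c$ factors over connected components. To any abstract graph $\Gamma$ with vertex set $V$, associate the polynomial
$$D(\Gamma) := \sum_{\substack{J \subseteq V \\ J \text{ totally disconnected}}} (-1)^{|J|} \prod_{v \in J} X_v \;\in\; \complex[X_v : v \in V],$$
which has constant term $1$ (from $J = \emptyset$). The bookkeeping of~\S~4.3, specialized to $\lambda = 0$ so that $X(w(J)) = \prod_{v \in J} X_v$ and $\varepsilon(w(J)) = (-1)^{|J|}$, identifies $c(\Gamma)$ with the coefficient of the fully regular monomial $\prod_{v \in V} X_v$ in $-\log D(\Gamma)$. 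Now suppose $\mcg = \mcg_1 \sqcup \mcg_2$, with vertex sets $V_1, V_2$ both nonempty and no edges between them. A subset $J \subseteq V_1 \sqcup V_2$ is totally disconnected in $\mcg$ iff each restriction $J \cap V_i$ is totally disconnected in $\mcg_i$, whence $D(\mcg) = D(\mcg_1)\, D(\mcg_2)$ and
$$-\log D(\mcg) = -\log D(\mcg_1) - \log D(\mcg_2).$$
Each summand on the right lives in a power series ring in only one of the two disjoint variable sets $\{X_v : v \in V_i\}$, while the monomial $\prod_{v \in V_1 \cup V_2} X_v$ uses variables from both. Hence its coefficient vanishes in each summand, and $c(\mcg) = 0$.

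The only step requiring care is the combinatorial reidentification of $c(\Gamma)$ with the coefficient of $\prod_v X_v$ in $-\log D(\Gamma)$; this is the calculation of~\S~4.3 repeated with the Kac--Moody ingredients (Weyl group, roots) stripped away to their combinatorial skeleton, so that the proof of~(ii) is genuinely purely combinatorial as advertised. The lexicographic induction in~(i) (using that deleting a cycle edge or contracting any edge preserves connectedness) and the factorization in~(ii) are then formal.
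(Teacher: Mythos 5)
Your proof is correct. Part (i) is essentially the paper's own argument: both proceed by induction on the number of edges, handle trees via Corollary \ref{tree}(1), and for a graph with a cycle pick an edge $e$ on the cycle so that $\mcge$ and $\mcgo$ are both connected with fewer edges, then invoke Proposition \ref{algo}; your packaging as a lexicographic induction on (vertices, edges) is only a cosmetic difference.

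Part (ii) is where you genuinely diverge. The paper stays inside the deletion--contraction recursion: if $\mcg$ is disconnected and has an edge $e$, then $\mcge$ and $\mcgo$ are still disconnected, so induction on edges reduces to the edgeless case, which is killed by a direct telescoping computation ($c_k(\mcg)=k(c_k(\mcg-v)+c_{k-1}(\mcg-v))$ plugged into equation \eqref{cdef}). You instead observe that $c(\mcg)$ is the coefficient of the full squarefree monomial $\prod_{v} X_v$ in $-\log D(\mcg)$, where $D(\mcg)=\sum_{J}(-1)^{|J|}\prod_{v\in J}X_v$ is the signed independence polynomial, that $D$ is multiplicative over disjoint unions, and hence that $-\log D(\mcg)$ splits as a sum of series in disjoint variable sets, none of which can contain the full monomial. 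Your identification of $c(\mcg)$ with that coefficient is a correct, purely combinatorial re-run of the computation in \S 4.3 (it is exactly the $\xi_1$-part of $U_0$ with the Weyl group discarded), and the factorization of $D$ over components is immediate since total disconnectedness is tested componentwise. What your route buys is a non-inductive, conceptual explanation of the vanishing --- disconnectedness forces a variable-separation in the logarithm --- and it extends verbatim to any number of components; what the paper's route buys is uniformity, since the same deletion--contraction induction drives both halves of the proposition. Both arguments are complete.
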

\begin{proof}
 Suppose $\mcg$ is a tree then we are done, since $c(\mcg)=1$. So assume that $\mcg$ contains a cycle, and pick an edge $e$ of this cycle.
Then, $\mcge$ remains connected. It is easy to see that $\mcgo$ is also connected. Both $\mcge$ and $\mcgo$ have strictly fewer edges than $\mcg$. Thus, proposition \ref{algo} together with an induction on the number of edges of $\mcg$ proves (i).  For (ii), suppose there is no edge in $\mcg$, then $\mcg$ has at least two vertices. Let $v$ be a vertex in $\mcg$.
Then since $c_k(\mcg)=k(c_k(\mcg - v)+c_{k-1}(\mcg - v))$, equation \eqref{cdef} gives $c(\mcg)=0$. 
So assume that $\mcg$ contains an edge, and let $e$ be a choosen edge. Then, both $\mcgo$ and  $\mcge$ remain disconnected and have strictly fewer edges than $\mcg$. 
Thus, proposition \ref{algo} together with an induction on the number of edges of $\mcg$ proves the result. 
\end{proof}

\begin{remark}
We note that proposition \ref{algo} gives a recursive algorithm to compute $c(\mcg)$. Since both $\mcge$ and $\mcgo$ have fewer edges than  
$\mcg$, this process terminates in at most $p$ steps, where $p$ is the number of edges in  $\mcg$. In practice, it is even better, terminating as soon as the resulting graphs are trees.
\end{remark}

\subsection{} Finally, putting together the two points of view on $c(\mcg)$, we deduce the following corollary concerning multiplicities of certain roots of symmetrizable Kac-Moody algebras.

\begin{corollary}
Let $\kma$ be an indecomposable symmetrizable Kac-Moody algebra and 
let $\alpha(\kma)$ denote the sum of the simple roots of $\kma$; recall that $\alpha(\kma)$ is a root of $\kma$. Let 
$A=(a_{ij})$  be the generalized Cartan matrix of $\kma$.
\be
\item The root multiplicity of $\alpha(\kma)$ only depends on the graph $\mcg$ of $\kma$. In other words, $\mult \alpha(\kma)$ only depends on the set $\{(i,j): a_{ij} \neq 0\}$ and not on the actual values of the $a_{ij}$.
\item If $\mcge$ and $\mcgo$ are as in proposition \ref{algo} and if $\kma^\dag_e$ and $\kma_e$ are symmetrizable Kac-Moody algebras with graphs $\mcge$ and $\mcgo$ respectively, then 
$\mult \alpha(\kma) = \mult \alpha(\kma^\dag_e) + \mult \alpha(\kma_e)$ \qed
\ee
\end{corollary}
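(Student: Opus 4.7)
The plan is to deduce both parts as immediate corollaries of the two complementary descriptions of $c(\mcg)$ that were established earlier: the combinatorial definition in \S\ref{abstrgr} (which manifestly depends only on the abstract graph) and the representation-theoretic identification $c(\mcg) = \mult \alpha(\kma)$ of proposition \ref{alter}. No new argument is really required; the work is in putting the two viewpoints side by side.

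For part (1), I would observe that $c(\mcg)$ is defined via the counts $c_k(\mcg) = |P_k(\mcg)|$, which depend only on the vertex and edge set of $\mcg$, i.e., on the support $\{(i,j) : a_{ij} \neq 0\}$ of the generalized Cartan matrix and not on the numerical values of the $a_{ij}$. Proposition \ref{alter} identifies $c(\mcg)$ with $\mult \alpha(\kma)$ for any symmetrizable Kac-Moody algebra $\kma$ whose graph is $\mcg$. Combining these two statements gives the assertion: any two symmetrizable generalized Cartan matrices with the same zero/non-zero pattern yield Kac-Moody algebras whose sum-of-simple-roots have the same root multiplicity.

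For part (2), I would apply proposition \ref{algo}, which gives $c(\mcg) = c(\mcge) + c(\mcgo)$, and then invoke proposition \ref{alter} term-by-term: $c(\mcg) = \mult \alpha(\kma)$, $c(\mcge) = \mult \alpha(\kma^\dag_e)$, and $c(\mcgo) = \mult \alpha(\kma_e)$. Rearranging gives the claimed additivity $\mult \alpha(\kma) = \mult \alpha(\kma^\dag_e) + \mult \alpha(\kma_e)$. The only point to verify is that proposition \ref{alter} remains valid when $\kma^\dag_e$ or $\kma_e$ happens to be decomposable, which can occur since deleting the edge $e$ may disconnect the graph. This is fine: the derivation of proposition \ref{alter} used only the Weyl-Kac denominator identity and is therefore insensitive to indecomposability, and in the decomposable case both sides simply vanish (consistent with $c = 0$ for disconnected graphs, as shown in the combinatorial proposition of \S\ref{four}).

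There is no real obstacle in this proof — it is essentially a dictionary lookup between the combinatorial and representation-theoretic incarnations of $c(\mcg)$. The interest of the corollary lies less in the difficulty of its proof than in the fact that it produces a non-trivial recursive formula for a specific family of root multiplicities purely by graph-theoretic edge deletion and contraction, a phenomenon which is not at all evident from the intrinsic structure theory of Kac-Moody algebras.
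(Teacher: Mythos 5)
Your proposal is correct and is essentially the paper's own argument: the corollary is stated with an immediate \qed precisely because it follows by combining proposition \ref{alter} ($c(\mcg)=\mult\alpha(\kma)$) with the purely graph-theoretic nature of $c(\mcg)$ and the recursion of proposition \ref{algo}. Your additional remark that proposition \ref{alter} still applies when $\mcge$ is disconnected (so that $\kma^\dag_e$ is decomposable and both sides vanish) is a worthwhile point of care that the paper leaves implicit.
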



\end{document}